\newtheorem{lemma}{Lemma}[section]
\newtheorem{proposition}[lemma]{Proposition}
\newtheorem{claim}[lemma]{Claim}
\newtheorem{conjecture}[lemma]{Conjecture}
\newtheorem{corollary}[lemma]{Corollary}
  \newcommand{\Label}[1]{\label{#1}\textcolor{green}{\tiny #1} }
  \renewcommand{\Label}[1]{\label{#1}}
\newcommand{\PP}{\mathsf{P}}
\newcommand{\ML}{\mathsf{ML}}
\newcommand{\TT}{\mathsf{T}}
\renewcommand{\H}{\mathbb{H}}
\newcommand{\CP}{\mathbb{C}{\rm P}}
\newcommand{\s}{\mathbb{S}}
\newcommand{\R}{\mathbb{R}}
\newcommand{\C}{\mathbb{C}}
\newcommand{\N}{\mathbb{N}}
\newcommand{\PSL}{{\rm PSL}}
\newcommand{\LLL}{\mathcal{L}}
\newcommand{\Conv}{\operatorname{Conv}}
\newcommand{\Isom}{\operatorname{Isom}}
\newcommand{\cc}{\circ}
\newcommand{\infi}{\infty}
\newcommand{\bdr}{\partial}
\newcommand{\alp}{\alpha}
\newcommand{\kap}{\kappa}
\newcommand{\gam}{\gamma}
\newcommand{\Gam}{\Gamma}
\newcommand{\ti}[1]{\tilde{#1}}
\renewcommand{\Im}{\rm Im}
\newcommand{\Core}{\rm Core}
\newcommand{\Gr}{\rm Gr}
\newcommand{\Lam}{\Lambda}
\newcommand{\minus}{\setminus}
\newcommand{\col}{\colon}
\newcommand{\til}{\tilde}
\newcommand{\ol}{\overline}
\newcommand{\sub}{\subset}
\newcommand{\Qed}[1]{\nopagebreak[4]{\tiny \hfill
\fbox{\ref{#1}} \linebreak }\pagebreak[2]}
\definecolor{dblue}{cmyk}{1,0, 0,.7}
\newtheorem{thm}{Theorem}
\begin{document}

\markright{ Thurston}

\title[\today]{On Thurston's parameterization of $\CP^1$-structures}

\author{Shinpei Baba}
\thanks{ Keywords: $\CP^1$-structures, measured laminations, pleated surfaces.  AMS classification, 57M50}
\address{Osaka University 
}

\email{baba@math.sci.osaka-u.ac.jp}
\maketitle

\begin{abstract}

Thurston established a correspondence between $\CP^1$-structures  (complex projective structures)  and equivariant pleated surfaces in the hyperbolic-three space $\H^3$,  in order to give a parameterization of the deformation space of $\CP^1$-structures. 
In this note, we summarize Thurston's parametrization of $\CP^1$-structures, based on  \cite{Kamishima-Tan-92} and  \cite{Kullkani-Pinkall-94}, giving an outline and the key points of its construction. 

 In addition we give independent proofs for the following well-known theorems on $\CP^1$-structures by means of pleated surfaces given by the parameterization. 
(1) Goldman's Theorem on $\CP^1$-structures with quasi-Fuchsian holonomy. (2) The path lifting property of developing maps in the domain of discontinuities in $\CP^1$. 

\end{abstract}

\markright{Parametrization of $\CP^1$-structures}

\tableofcontents
\section{Introduction}
Let $\PP$ be the space of all (marked) $\CP^1$-structures on  a closed oriented surface  $S$ of genus at least two (\S \ref{CPOneStructures}). 
Thurston gave the following parameterization of $\PP$, using pleated surfaces in the hyperbolic three-space $\H^3$. 

\begin{thm}(Thurston, \cite{Kamishima-Tan-92}  \cite{Kullkani-Pinkall-94} )\Label{ThurstonCoordimates} 
$$\PP \cong \ML \times \TT,$$
where $\ML$ is the space of measured laminations on $S$ and $\TT$ is the space of all (marked) hyperbolic structures on $S$. 
\end{thm}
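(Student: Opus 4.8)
The plan is to produce explicit maps in both directions between $\PP$ and $\ML \times \TT$ and to verify that they are mutually inverse homeomorphisms. The geometric bridge, following Kulkarni--Pinkall, is that every $\CP^1$-structure carries a canonical decomposition built from maximal round disks in $\CP^1$, and that the bounding circles of these disks, regarded as ideal boundaries of totally geodesic planes in $\H^3$ (with $\CP^1 = \bdr_\infty \H^3$), envelop an equivariant pleated surface.

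First I would construct the forward map $\Phi \col \PP \to \ML \times \TT$. Given a $\CP^1$-structure $C$ with developing map $f \col \til S \to \CP^1$ and holonomy $\rho \col \pi_1(S) \to \PSL(2,\C)$, I assign to each $x \in \til S$ the Kulkarni--Pinkall maximal round disk $D_x \sub \CP^1$ containing $f(x)$. Each circle $\bdr D_x$ is the ideal boundary of a unique totally geodesic plane $H_x \sub \H^3$, and the family $\{H_x\}$ envelops a $\rho$-equivariant pleated surface $\til P \sub \H^3$; the associated collapsing map $\til S \to \til P$ is $\rho$-equivariant. The pleating locus of $\til P$ descends to a measured geodesic lamination $\lam \in \ML$, whose transverse measure records the dihedral (bending) angles, and the intrinsic path metric of $\til P$ descends to a hyperbolic structure $\tau \in \TT$. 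I set $\Phi(C) = (\lam, \tau)$.

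Next I would construct the inverse by grafting. Given $(\lam, \tau) \in \ML \times \TT$, I realize $\tau$ as a $\rho$-equivariant pleated surface in $\H^3$ bent along $\lam$ with the prescribed transverse measure; on a closed surface the measure is finite, so the bending (quakebend) construction applies. Reversing the collapsing map---replacing each totally geodesic stratum and each bending leaf of the pleated surface by the corresponding round-disk data in $\CP^1$---yields a developing map and holonomy, hence a $\CP^1$-structure. This is Thurston's grafting $\Gr_\lam(\tau)$: one cuts $\tau$ open along $\lam$ and inserts projective bending annuli carrying the measure. This defines $\Psi \col \ML \times \TT \to \PP$.

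Finally I would check that $\Phi$ and $\Psi$ are mutually inverse and continuous. The composition $\Phi \cc \Psi$ returns $(\lam, \tau)$ because grafting is engineered so that the maximal disks of $\Gr_\lam(\tau)$ reconstruct exactly the supporting planes of the bent surface, while $\Psi \cc \Phi$ returns $C$ because the Kulkarni--Pinkall decomposition is canonical and its collapse data determine $f$ up to these grafting moves. The hard part will be the analytic core of $\Phi$: showing that $x \mapsto D_x$ varies continuously, that the collapsing map is well defined and equivariant, and, most delicately, that the pleating locus is a genuine geodesic lamination carrying a countably additive finite transverse measure rather than a merely measurable bending set. Once $\Phi$ is known to be a continuous bijection and both sides are seen to be manifolds of the same dimension $12g-12$, continuity of $\Psi$ follows from invariance of domain.
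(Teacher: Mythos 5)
Your overall route is the same as the paper's: a forward map built from Kulkarni--Pinkall maximal disks enveloping an equivariant pleated surface, an inverse map given by grafting, and a brief check that the two constructions undo each other. However, there is a genuine gap at the very first step, the definition of $D_x$. You take ``the maximal round disk $D_x \sub \CP^1$ containing $f(x)$,'' and this is not well-posed, for two reasons. First, since $f$ is only a local homeomorphism (typically non-injective, with image often all of $\CP^1$), maximality must be measured upstairs: the correct objects are open subsets of $\til{S}$ that $f$ embeds onto round disks in $\CP^1$, maximal among such subsets --- this is exactly why the paper insists on working ``locally'' with maximal disks in $\ti{C}$ rather than with regions of $\CP^1$. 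Second, even with the correct definition, a point $x$ lies in many maximal disks (their union is the canonical neighborhood $U_x$), so ``the'' maximal disk containing $x$ does not exist; the canonical assignment sends $x$ to the unique maximal disk whose \emph{core} $\Core(D)$, the convex hull of its ideal boundary points $\bdr_\infi D$, contains $x$. That every point lies in the core of exactly one maximal disk is precisely Proposition \ref{stratification}, whose proof rests on Proposition \ref{CanonicalNeighborhood} and Lemma \ref{DisksInCanonicalNbhd}; it is the engine that makes the pleated surface, the collapsing map, and the bending lamination well defined. In your plan this existence-and-uniqueness statement is silently assumed (you flag continuity of $x \mapsto D_x$ and well-definedness of the collapsing map as the hard analytic points, but not the well-definedness of the disk assignment itself), so the forward map never actually gets defined.

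Two smaller points. In the grafting direction, ``cut $\tau$ open along $\lam$ and insert projective bending annuli'' literally makes sense only when $\lam$ is a weighted multicurve; the paper treats a general lamination by approximating $L$ by rational laminations $L_i$ and passing to the limit of the associated structures and pleated surfaces, and some such limiting (or direct bending-cocycle) argument is unavoidable. On the other hand, your invariance-of-domain argument for upgrading a continuous bijection to a homeomorphism is a legitimate addition: the paper only outlines the two constructions and the fact that they are mutually inverse, and does not address the topological part of the statement at all.
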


In \S \ref{ThurstonParameter}, we outline this correspondence, in part, giving more details, following the work of Kulkarni and Pinkall \cite{Kullkani-Pinkall-94}.  
A hyperbolic structure on $S$ is in particular a $\CP^1$ structure, and its holonomy is a discrete and faithful representation of $\pi_1(S)$ into $\PSL(2, \R)$, called a {\it Fuchsian representation}. 
One holonomy representation of a $\CP^1$-structure on $S$ corresponds to countably many different $\CP^1$-structures on $S$. 
Indeed, there is an operation called $2\pi$-grafting (or simply grafting) which transforms a $\CP^1$-structure to  a new $\CP^1$-structure, preserving its holonomy representations. 
The following theorem of Goldman characterizes all $\CP^1$-structures with fixed Fuchsian holonomy.
\begin{thm}[\cite{Goldman-87}]\Label{Goldman}
Every $\CP^1$-structure $C$  on $S$ with Fuchsian holonomy $\rho$ is obtained by grafting the hyperbolic structure $\tau$ along a unique multiloop $M$. 
\end{thm}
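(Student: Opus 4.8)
The plan is to use Thurston's parameterization from Theorem~\ref{ThurstonCoordimates} together with the structure of the holonomy representation. Given a $\CP^1$-structure $C$ with Fuchsian holonomy $\rho$, Thurston's coordinates assign to $C$ a pair $(L, \sigma) \in \ML \times \TT$, where $L$ is a measured lamination and $\sigma$ is a hyperbolic structure, realized geometrically by an $\rho$-equivariant pleated surface whose bending lamination is $L$. The key observation is that since $\rho$ is Fuchsian, its image preserves a round circle in $\partial \H^3 = \CP^1$, equivalently a hyperbolic plane $\Pi \subset \H^3$, and the limit set is that circle. The strategy is to exploit this rigidity: the pleated surface associated to $C$ must interact with the invariant plane $\Pi$ in a very constrained way.

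First I would recall that the hyperbolic structure $\tau$ with holonomy $\rho$ corresponds under Thurston's parameterization to the pair $(0, \tau)$, i.e.\ the totally geodesic pleated surface mapping onto $\Pi$ with zero bending. Next, I would analyze the $\rho$-equivariant pleated surface $\t f$ associated to $C$ and its bending measured lamination $L$. The central claim to establish is that because $\rho$ preserves $\Pi$, the pleating locus must be \emph{geodesic} with respect to $\tau$ and, moreover, the bending angles must all be multiples of $2\pi$: any geodesic pleat bending the surface out of $\Pi$ by an angle that is not a multiple of $2\pi$ would force the image to leave the $\rho$-invariant configuration, contradicting equivariance and discreteness of $\rho$. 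This is where grafting enters: $2\pi$-grafting along a weighted geodesic inserts exactly such a $2\pi$-bend (an annular cylinder), so the admissible pleated surfaces are precisely those obtained from $\Pi$ by $2\pi$-grafts. Hence $L$ is supported on a multiloop $M$ (a disjoint union of simple closed geodesics) with each weight a multiple of $2\pi$, and $C$ is recovered as the $2\pi$-grafting of $\tau$ along $M$.

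The main obstacle I anticipate is proving that the support of $L$ is a \emph{multiloop} of closed geodesics with purely $2\pi$-integer weights, rather than an arbitrary measured lamination with irrational or continuous transverse measure. To rule out the latter, I would argue by holonomy: the holonomy of the $\CP^1$-structure along a loop transverse to the pleating locus accumulates the bending data, and for the developing map to remain equivariant under the single invariant round circle of $\rho$, each leaf crossing must contribute a rotation that is trivial in $\PSL(2,\R)$ acting on $\Pi$, forcing the angle to lie in $2\pi\Z$. A continuous or irrationally-weighted lamination would produce holonomy rotating $\Pi$ by a non-integer multiple of $2\pi$ about some geodesic, which cannot preserve the single invariant plane; this discreteness argument is the technical heart. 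Finally, uniqueness of $M$ follows from the injectivity of Thurston's parameterization: distinct multiloops yield distinct measured laminations $L$, hence distinct points $(L,\tau) \in \ML \times \TT$ and distinct $\CP^1$-structures, so the multiloop realizing a given $C$ is unique.
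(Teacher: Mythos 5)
Your skeleton matches the paper's proof: pass to Thurston coordinates $(\tau, L)$, use the $\rho$-invariant circle to force $L$ to be a multiloop with $2\pi$-multiple weights, and deduce uniqueness from injectivity of the parameterization of Theorem \ref{ThurstonCoordimates}. However, the step you yourself flag as the technical heart --- ruling out irrational or continuously weighted laminations and pinning the weights to $2\pi\Z$ --- is not actually established by your sketch, and the argument you propose would fail. First, for an irrational lamination there are no discrete ``leaf crossings'' each contributing a well-defined rotation: the bending along a transversal is a limit of compositions of infinitesimal rotations, so the sentence ``each leaf crossing must contribute a rotation that is trivial in $\PSL(2,\R)$'' has no meaning in exactly the case you need to exclude. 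Second, even when the crossings are discrete, the constraint coming from the holonomy is only on the \emph{total} product of bending rotations and translations along each element of $\pi_1(S)$; a condition on a product does not descend to a condition on its factors, so nothing you say prevents several non-$2\pi$ bendings from composing to an element of $\PSL(2,\R)$. Third, preservation of the invariant circle alone can only ever give weights in $\pi\Z$, not $2\pi\Z$: a rotation by $\pi$ about a geodesic of $\Pi$ still preserves $\partial\Pi$ (it swaps the two complementary disks), which is precisely why the paper's Proposition \ref{TwoPiMultiple} concludes only $\pi$-multiple weights for general real holonomy and needs a separate orientation argument, using Fuchsian-ness, to upgrade to $2\pi$.

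The missing idea, which is what the paper supplies, is Lemma \ref{PleatintAtLimitSet}: every leaf of $\til{L}$ is taken by the pleated surface $\beta$ to a geodesic whose endpoints lie in the limit set $\Lambda$. This is proved by approximating a non-periodic leaf by lifts of closed geodesics, whose $\beta$-images are quasi-geodesics invariant under hyperbolic holonomy elements and hence have endpoints in $\Lambda$, and then using that $\Lambda$ is closed. From this one gets Corollary \ref{FuchsianStrata} (all ideal points of strata lie in $\Lambda$), and then rigidity does the work that your holonomy argument cannot: a two-dimensional stratum has at least three ideal points, and three points of the round circle $\Lambda$ determine the hyperbolic plane $H_\Lambda$ it bounds, so \emph{every} two-dimensional stratum has support plane exactly $H_\Lambda$. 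This simultaneously excludes irrational sublaminations (nearby strata would have distinct support planes meeting at small nonzero angles, impossible when both equal $H_\Lambda$) and forces each weight to be a $\pi$-multiple; the final coincidence $\beta = \beta_0$ along boundary leaves, with matching orientations of the support planes, gives the $2\pi$-multiples in the Fuchsian case. Your proof needs this lemma, or an equivalent statement about where the pleating locus lands in $\partial\H^3$, to go through; with it, the rest of your outline (grafting cylinders over the weighted loops, uniqueness via injectivity of the parameterization) is sound and agrees with the paper.
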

Goldman actually proved the theorem for more general quasi-Fuchsian groups, although the proof is immediately reduced to the case of  Fuchsian representations by a quasiconformal map of $\CP^1$.
Let $C$ be a $\CP^1$-structure with Fuchsian holonomy $\pi_1(S) \to \PSL(2, \C)$. 
Then,  by Theorem \ref{Goldman},   $C$ corresponds to $(\tau, M)$, where $\tau$ is the hyperbolic structure $\H^2/ \Im \rho$ and each loop of $M$ has a $2\pi$-multiple weight. 

For a subgroup $\Gamma \sub \PSL(2, \C)$,  the {\it limit set} of $\Gamma$ is the set of the accumulation points of a $\Gamma$-orbit in $\CP^1$, and the domain of discontinuity is the complement of the limit set in $\CP^1$. 
In \S \ref{ProofGoldman}, we give an alternative proof of Theorem \ref{Goldman}, directly using pleated surfaces given by the Thurston parameters. 

The following Theorem is a technical part of the proof of Theorem \ref{Goldman}, which was originally missing. 
\begin{thm}[\cite{Choi-Lee-97}, see also \S14.4.1. in  \cite{Goldman(18)}]\Label{Covering}
Let $(f, \rho)$ be a developing pair of a $\CP^1$-structure on $S$. 
Let $\Omega$ be the domain of discontinuity of $\Im \rho$.
Then, for each connected component $U$ of $f^{-1}(\Omega)$,  the restriction of $f$ to $U$ is a covering map onto its image.
\end{thm}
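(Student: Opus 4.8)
The plan is to deduce the statement from the \emph{path-lifting property}. Since $f|_U$ is a local homeomorphism onto the open set $f(U) \sub \Omega$, and an open subset of $\CP^1$ is a locally path-connected, semi-locally simply connected surface, it is standard that $f|_U \col U \to f(U)$ is a covering map onto its image if and only if every path in $f(U)$ admits a lift to $U$ with prescribed initial point; uniqueness of lifts is automatic, as $\til S$ is Hausdorff and $f$ is a local homeomorphism. Thus the whole problem reduces to establishing path lifting into $U$. To control lifts metrically, I would equip $\Omega$ with the Poincaré metric $g$, that is, the hyperbolic metric on each component (for nonelementary $\Im\rho$ each component of $\Omega$ omits the infinite limit set, hence is hyperbolic; the elementary cases are easier and I set them aside). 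Since $\Im\rho$ acts on $\Omega$ by Möbius transformations, $g$ is $\Im\rho$-invariant and complete on each component. Pulling back by $f$ gives a Riemannian metric $h = f^\ast g$ on $W := f^{-1}(\Omega)$ making $f$ a local isometry; because $f$ is $\rho$-equivariant and $g$ is $\Im\rho$-invariant, $h$ is $\pi_1(S)$-invariant and descends to $W/\pi_1(S) \sub S$.

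Given a path $\alpha$ in $f(U)$ with $\alpha(0) = f(u_0)$, I would lift it on a maximal subinterval $[0,T)$ to $\til\alpha \col [0,T) \to U$, and it suffices to show $\til\alpha(t)$ does not leave every compact subset of $U$ as $t \to T$. Reducing to $g$-rectifiable $\alpha$ is harmless, since the covering property only requires lifting short radial geodesics of $g$; then the $h$-length of $\til\alpha|_{[0,T)}$ equals the finite $g$-length of $\alpha|_{[0,T)}$, so $\til\alpha(t)$ is $h$-Cauchy. If $\til\alpha(t)$ accumulated at a finite point $x \in \til S$, then either $x \in U$, contradicting maximality, or $x \in \bdr U$; but a boundary point of a component of $f^{-1}(\Omega)$ lies in $f^{-1}(\Lam)$, whereas $f(x) = \lim \alpha(t) = \alpha(T) \in f(U) \sub \Omega$ is disjoint from $\Lam$, a contradiction.

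The remaining possibility is that $\til\alpha(t)$ escapes to infinity in $\til S$ while its image $\alpha(t)$ converges inside $\Omega$. Here I would exploit compactness of $S$: choosing $\gam_n \in \pi_1(S)$ and $t_n \uparrow T$ so that $\gam_n^{-1}\til\alpha(t_n)$ converges to some $\til p \in \til S$ with $\gam_n \to \infty$, equivariance gives $\rho(\gam_n)^{-1}\alpha(t_n) \to f(\til p)$. Proper discontinuity of $\Im\rho$ on $\Omega$ then forces $f(\til p) \in \Lam$ whenever the $\rho(\gam_n)$ realize infinitely many distinct elements, so $\til p \in \bdr W$ is accumulated by the finite-$h$-length tail of the lift, with $f$-image in the limit set.

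I expect this last configuration --- the translated lift limiting onto $f^{-1}(\Lam)$ --- to be the main obstacle, and ruling it out is precisely the technical point that was originally missing. To exclude it I would feed in the explicit structure of $f$ near the limit set supplied by the pleated-surface / Thurston-coordinate picture of \S\ref{ThurstonParameter}, using the completeness of $g$ to keep the $h$-bounded tail of $\til\alpha$ in a compact part of $W$ and thereby contradict the escape. Once escape to infinity is excluded, $\til\alpha$ extends to $t = T$, the maximal interval is all of $[0,1]$, path lifting holds, and $f|_U$ is a covering onto its image.
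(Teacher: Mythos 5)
Your proposal has a genuine gap: the step that carries the entire content of the theorem is announced but not carried out. The reduction of the covering property to path lifting, the dichotomy for a maximally lifted path (accumulation at a point of $\til{S}$ versus escape to infinity), and the elimination of the accumulation case are all correct but soft. The hard case --- a lift of finite pulled-back hyperbolic length escaping every compact set of $\til{S}$ while its image converges inside $\Omega$ --- is exactly the configuration the theorem exists to rule out, and you only promise to ``feed in the explicit structure of $f$ near the limit set'' without saying what that structure is or how it excludes the escape. Moreover, the mechanism you sketch for this step is circular: completeness of the Poincar\'e metric $g$ on $\Omega$ does not keep the $h$-bounded tail of $\til\alpha$ in a compact part of $W = f^{-1}(\Omega)$, because completeness of the pullback metric $h = f^\ast g$ on a component $U$ is essentially equivalent to $f|_U$ being a covering onto its image --- the statement being proved. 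An $h$-Cauchy tail maps to a $g$-convergent path in $\Omega$, but nothing in your argument prevents the tail itself from leaving every compact subset of $W$. (A smaller unaddressed point: when the $\rho(\gam_n)$ take only finitely many values, proper discontinuity gives no contradiction at all, so even your partial conclusion $f(\til{p}) \in \Lam$ does not cover all escape scenarios.)

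For comparison, the paper does not argue by path lifting at all; it produces evenly covered neighborhoods directly from the pleated surface of Thurston's parametrization. Lemma \ref{PleatintAtLimitSet} shows that the pleated surface sends every leaf of the bending lamination to a geodesic with endpoints in the limit set $\Lam$ (closed leaves give quasi-geodesics invariant under hyperbolic holonomy elements, whose fixed points lie in $\Lam$; irrational leaves are Hausdorff limits of closed ones and $\Lam$ is closed). Hence, by Corollary \ref{FuchsianStrata}, the ideal points of every maximal disk of $\til{C}$ lie in $\Lam$. This yields the key statement: for each $x \in \Omega$ there is a neighborhood $V_x \sub \Omega$ such that every $y \in \til{S}$ with $f(y) \in V_x$ lies in a maximal disk whose $f$-image (a round disk) contains all of $V_x$ --- otherwise one finds strata whose ideal points accumulate at $x$, forcing $x \in \Lam$, contradicting $x \in \Omega$. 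Since $f$ embeds maximal disks, $V_x$ is evenly covered, and the covering property follows immediately. If you want to salvage your outline, this is precisely the input you would need at your ``main obstacle'': a uniform round disk through every preimage point near $x$ prevents the lift from escaping. Without some such quantitative consequence of the Thurston coordinates, your outline is a correct framing of the problem rather than a proof.
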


Note that as  developing  maps are local homeomorphisms, Theorem \ref{Covering} is equivalent to saying that $f$  has the path lifting property in the domain of discontinuity of $\Im \rho$.

We also give an alternative proof of  Theorem \ref{Covering} in \S\ref{LfitingProperty}, using Thurston's parametrization. 

Theorem \ref{Goldman} states that given two $\CP^1$-structures $C_1$ and $C_2$ with Fuchsian holonomy, $C_1$ can be transformed into $C_2$, via the hyperbolic structure, by a composition of an inverse-grafting and a grafting (where an inverse grafting is the opposite of grafting which remove a cylinder for $2\pi$-grafting). 
 The following question due to Gallo, Kapovich, and Marden  remains open.

\begin{conjecture}[\S 12.1  in \cite{Gallo-Kapovich-Marden}]
Given two $\CP^1$-structures $C_1, C_2$ on $S$ with fixed holonomy $\pi_1(S) \to \PSL(2, \C)$,  there is a composition of  grafts and  inverses of grafts which transforms $C_1$ into $C_2$.
\end{conjecture}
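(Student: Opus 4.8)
The plan is to work entirely within Thurston's parameterization (Theorem \ref{ThurstonCoordimates}) and to study the fiber $\PP_\rho \sub \PP$ of all $\CP^1$-structures whose holonomy is the fixed representation $\rho \colon \pi_1(S) \to \PSL(2, \C)$. Writing each structure in Thurston coordinates, $C_i \cong (L_i, \tau_i) \in \ML \times \TT$, the holonomy of $C_i$ is the bending representation obtained by bending the hyperbolic surface $\tau_i$ along $L_i$. The basic mechanism is that $2\pi$-grafting along a simple closed geodesic adds an integer multiple of $2\pi$ to the bending angle along that curve, and a $2\pi$-rotation about a geodesic axis of $\H^3$ is the identity in $\PSL(2, \C)$; hence each graft and inverse-graft is a move within the single fiber $\PP_\rho$. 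The conjecture is therefore equivalent to the assertion that the graph on $\PP_\rho$ whose edges are single grafts and inverse-grafts along admissible loops is connected. I would try to prove connectivity by joining an arbitrary $C \in \PP_\rho$ to a normal form determined by $\rho$ alone.

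First I would treat the model case to fix the strategy. When $\rho$ is Fuchsian (or quasi-Fuchsian), Theorem \ref{Goldman} already supplies the normal form: every such $C$ inverse-grafts down to the hyperbolic structure $\tau = \H^2 / \Im \rho$ along a unique multiloop, so any two structures $C_1, C_2$ with the same Fuchsian holonomy are joined through the common base $\tau$. The content of the general case is to replace this canonical base, which no longer exists, by a reduction procedure. Here I would invoke Theorem \ref{Covering}: over the domain of discontinuity $\Omega$ of $\Im \rho$, the developing map restricts to a covering onto its image on each component of $f^{-1}(\Omega)$. This is precisely the tool that detects graftable cylinders, namely maximal annular regions whose developing image lies in $\Omega$ and whose core wraps nontrivially; such a cylinder can be removed by an inverse-graft along an admissible loop isotopic to its core.

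The main body of the argument would be an induction that strictly reduces a complexity of $C$. Concretely, I would locate an admissible loop $\gamma$ --- a closed leaf of the grafting lamination, or the core of a graftable cylinder produced by Theorem \ref{Covering} --- inverse-graft along $\gamma$, and show that a suitable complexity (for instance the total weight of $L$ together with the total wrapping of the developing map over $\Omega$) decreases. Iterating, $C$ is carried to a structure whose grafting lamination is reduced to a minimal configuration compatible with $\rho$; a final step would show that any two such reduced structures with the same holonomy either coincide or are related by a bounded number of moves, completing the chain $C_1 \to \text{reduced} \leftrightarrow \text{reduced} \leftarrow C_2$.

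The hard part will be the case in which the reduction stalls because there is no loop to graft along, which happens in two ways. First, when the grafting lamination is irrational --- genuinely laminar with no closed leaves --- there may be no admissible simple closed curve at all, so the inductive step has nothing to act on, and one must instead approximate by closed leaves while controlling holonomy, which is delicate. Second, and more seriously, when $\Im \rho$ fails to be discrete the domain of discontinuity $\Omega$ can be empty or dense, so Theorem \ref{Covering} gives no leverage and the very notion of a graftable cylinder degenerates. Guaranteeing that the complexity strictly decreases and that the procedure terminates rather than cycling, uniformly over all such $\rho$, is the essential obstacle, and it is the reason the conjecture remains open in full generality.
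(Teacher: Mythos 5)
The statement you are addressing is not a theorem of the paper but an open conjecture: the paper records it (attributed to Gallo--Kapovich--Marden), notes that it is known for purely loxodromic holonomy and for Schottky representations, and offers no proof. So there is no argument in the paper to compare yours against, and your text --- as you concede in your final paragraph --- is a research program rather than a proof. A review must therefore judge whether the program closes the gap, and it does not.

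Concretely: (a) your key mechanism, that Theorem \ref{Covering} ``detects graftable cylinders,'' is not what that theorem gives. An inverse $2\pi$-graft requires locating inside $C$ an embedded copy of $T \setminus \ell$ (the grafted torus piece of \S\ref{sGrafting}) glued compatibly with the developing map; the covering property of $f$ over components of $f^{-1}(\Omega)$ does not by itself produce such a configuration, and for non-discrete $\rho$ the domain $\Omega$ may be empty, so the tool vanishes exactly where the conjecture is hardest --- the purely loxodromic non-discrete case that was in fact settled by entirely different methods. (b) The assertion that $2\pi$-grafting ``adds $2\pi$ to the bending angle along that curve,'' so that each move shifts Thurston coordinates additively within the fiber $\PP_\rho$, holds only when $\ell$ is a closed leaf of $L$ (equivalently, circular in $C$); for a general admissible loop the Thurston parameters of $\Gr_\ell(C)$ differ from those of $C$ globally, not by a local weight change, so your proposed complexity (total weight of $L$ plus wrapping over $\Omega$) has no monotonicity and the induction can stall or cycle --- which is precisely the termination problem you name but do not resolve. (c) You give no construction of the ``reduced'' normal form nor a uniqueness statement for it; indeed for general $\rho$ no canonical base point analogous to $\tau = \H^2/\Im\rho$ in Theorem \ref{Goldman} exists, and Proposition \ref{TwoPiMultiple}, which forces $2\pi$-multiple weights and hence the multiloop structure of $L$, uses crucially that the limit set is a round circle, so it does not generalize. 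What you have correctly identified is the standard reformulation (connectivity of the grafting-move graph on the fiber $\PP_\rho$) and the quasi-Fuchsian model case via Theorem \ref{Goldman}; everything beyond that is the open problem itself, not a gap that your outline fills.
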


Although \cite{Gallo-Kapovich-Marden} stated this conjecture in the form of a question, we state it more positively since it has been solved affirmatively for generic holonomy representations, namely, for purely loxodromic representations
\cite{ Baba-15gt, Baba-17}.  (For Schottky representations, see \cite{Baba12}.)
There is also a version of this question for branched $\CP^1$-structures (Problem 12:1:2 in \cite{Gallo-Kapovich-Marden}); see
\cite{Calsamiglia4DeroinFrancaviglia14} \cite{Ruffoni20} for some progress in the case of branched $\CP^1$-structures. 

Recently, Gupta and Mj \cite{GuptaMj(19)} gave a generalization of Theorem \ref{ThurstonCoordimates} to certain $\CP^1$-structures on a surface with punctures (namely, $\CP^1$-structures  which corresponds to compact Riemann surfaces with meromorphic quadratic differentials whose poles are of order at least three); see also \cite{AllegrettiBridgeland(18)MonodromoyOfMeromorphicProjectiveStructures}.

{\bf Acknowledgements:}
I thank Gye-Seon Lee and  the mathematics department of Universit\"at Heidelberg  for their hospitality where much of this chapter was written.
The author is  supported by  JSPS Grant-in-Aid for Research Activity start-up (18H05833) and
Grant-in-Aid for Scientific Research C (20K03610).
He also thanks the anonymous referee and the editor for comments, which in particular made some parts of the arguments clearer. 

\section{$\CP^1$-structures on surfaces}\Label{CPOneStructures}  
General references for $\CP^1$-structures are can be found in, for example, \cite{Dumas-08, Kapovich-01}.

A $\CP^1$-{\it  structure} on $S$ is  a $(\CP^1,  \PSL(2, \C))$-structure, i.e. a maximal atlas of charts  embedding open subsets of $S$ onto open subsets \ of $\CP^1$ such that their transition maps are in $\PSL(2, \C)$. 
Let $\ti{S}$ be the universal cover of $S$, which is topologically an open disk. 
Then, equivalently, a $\CP^1$-structure on $S$ is  defined as a pair $(f, \rho)$ consisting of 
\begin{itemize}
\item a local homeomorphism $f\col \ti{S} \to \CP^1$  ({\it developing map}) and 
\item a homomorphism  $\rho\col \pi_1(S) \to \PSL(2, \C)$ ({\it holonomy representation})
\end{itemize}
such that $f$ is $\rho$-equivariant (i.e. $f \alpha = \rho(\alpha) f$ for all $\alpha \in \pi_1(S)$). 
This pair$(f, \rho)$ is called the {\it developing pair} of $C$, and $(f, \rho)$ is, by definition,  equivalent to $(\gamma f,   \gam \rho \gam^{-1})$ for all $\gam \in \PSL(2, \C)$.
Due to  the equivariant condition, we do not usually need to distinguish between an element of $\pi_1(S)$ and its free homotopy class. 
Let $\PP$ be the deformation space of all $\CP^1$-structures on $S$; then  $\PP$  has a natural topology, given by  the open-compact topology on the developing maps $f\col \ti{S} \to \CP^1$. 

Notice that hyperbolic structures are, in particular, $\CP^1$-structures, as $\H^2$ is the upper half-plane in $\C$ and the orientation-preserving isometry group $\Isom \H^2$ is  the subgroup $\PSL(2, \R)$ of $\PSL(2, \C)$.\

\section{Grafting}\Label{sGrafting}
A grafting is a cut-and-paste operation of a $\CP^1$-structure inserting some structure along  a loop, an arc or more generally a lamination, originally due to \cite{Sullivan-Thurston-83, Hejhal-75, Maskit-69}. There are slightly different versions of grafting, but they all yield new $\CP^1$-structures without changing the topological types of the base surfaces.

A {\it round circle} in $\CP^1 = \C \cup \{\infty\}$ is a round circle in $\C$ or a straight line in $\C$ plus $\infty$. 
A {\it round disk} in $\CP^1$ is a disk bounded by a round circle.
An arc $\alpha$ on a $\CP^1$-structure is {\it circular} if $\alpha$ is sent into a round circle on $\CP^1$ by the developing map. 
Similarly, a loop $\alpha$ on a $\CP^1$-structure $C$ is {\it circular} if its lift $\ti\alpha$ to the universal cover is sent into a circular arc $\CP^1$ by the developing map. 

We first define a grafting along a circular arc on a $\CP^1$-structure.
For $\theta > 0$,  consider the horizontal biinfinite strip $\R \times [0, \theta i] $ in $\C$ of height $\theta$. 
Then let $R_\theta$ be the $\CP^1$-structure on the strip whose developing map is the restriction of the exponential map $\exp\col \C \to \C \minus \{0\}$.
This $\CP^1$-structure is called the {\it crescent} of angle $\theta$ or simply {\it $\theta$-crescent}. 

Let $\ell$ be a (biinfinite) circular arc properly embedded in a $\CP^1$-surface $C$. 
Then the {\it grafting} of $C$ along $\ell$ by $\theta$ is the insertion of this strip $R_\theta$ along $\ell$ \, ($\theta$-{\it grafting}), to be precise, as follows:
Notice that $C \minus \ell$ has two boundary components isomorphic to $\ell$. 
Then we take a union of  $C \minus \ell$ and $\R \times [0, \theta i]$ by  an isomorphism between $\bdr (C \minus \ell)$ and $\bdr(\R \times [0, \theta i] )$ so that there is ``no shearing'', i.e. for each $r \in \R$, the vertical arc $r \times [0, \theta i] $ connects the points of the different boundary components of  $C \minus \ell$ corresponding to the same point of $\ell$.

Let $\ell$ be a circular loop on a projective surface $C$. 
We can similarly define a grafting along $\ell$ by grafting the universal cover $\ti{C}$ of $C$ in an equivariant manner:
Letting $\phi\col \ti{C} \to C$ be the universal covering map, 
$\phi^{-1} (\ell)$ is a union of disjoint circular arcs property embedded in $\ti{C}$ which is invariant under $\pi_1(S)$.

Then, we insert  a $\theta$-crescent along each arc of $\phi^{-1}(\ell)$ as above. 
By quotienting out the resulting structure by $\pi_1(S)$, we obtain a new $\CP^1$-structure homeomorphic to $C$, since  a cylinder is inserted to $C$ along $\ell$. 
Indeed, the stabilizer of an arc $\ti\ell$ of $\phi^{-1}(\ell)$ is an infinite cyclic group generated by an element $\gamma \in \pi_1(S)$ whose free homotopy class is $\ell$, and the cyclic group $\langle \gamma \rangle$ acts on $R_\theta$ so that the quotient is the inserted cylinder ({\it grafting cylinder of height $\theta$}). 

Note that $R_\theta$ is foliated by horizontal lines $\R \times \{y\}$, $y \in [0, \theta]$.
Then it has a natural transverse measure given by the difference of the second coordinates. 
This measured foliation descends to a measured foliation on the grafting cylinder.
In addition, there is a natural projection $R_\theta \to \R$ to the first coordinate ({\it collapsing map}). 
This projection descends to a collapsing map of a grafting cylinder to a circle.

Let $\Gr_{\ell, \theta}(C)$ denote the resulting $\CP^1$-structure homeomorphic to $C$.  
Notice that the holonomy along the circular loop $\ell$  is hyperbolic, as it has exactly two fixed points on $\CP^1$ which are the endpoints of the developments of $\ell$. 

In the case that $\theta$ is an integer multiple of $2\pi$,  the holonomy $C$ is not changed by the $\theta$-grafting, since the developing map does not change in $\phi^{-1}(C \minus \ell)$. 
In particular, the $2\pi$-grafting along a circular loop $\ell$ inserts a copy of $\CP^1$ minus a circular arc along each lift of $\ell$. 

In fact, a $2\pi$-grafting is still well-defined along a more general loop. 
A loop $\ell$ on $C = (f, \rho)$ is ${\it admissible}$ if $\rho(\gam)$ is hyperbolic and an (equivalently, every) lift $\ti\ell$ of $\ell$ embeds into $\CP^1$ by $f$.  
Given such a loop, we can insert a copy of $\CP^1 \minus (f(\ti{\ell})  \cup {\rm Fix}(\rho(\gamma)))$ along $\ti\ell$, where ${\rm Fix}(\rho(\gam))$ denotes the fixed points of $\rho(\gam)$.
 Note that the quotient of  $\CP^1 \minus {\rm Fix}(\rho(\gam))$  by the infinite cyclic group generated by $\rho(\gamma)$ is a projective structure $T$ on a torus, and the development $f(\ti\ell)$ covers a simple loop  on $T$ isomorphic to $\ell$.
  By abuse of notation, we also denote the loop on $T$ by $\ell$.
Then the $2\pi$-grafting of $C$ along $\ell$ is given by identifying the boundary loops of $C \minus \ell$ and $T \minus \ell$ by the isomorphism. 
Denote by $\Gr_\ell(C)$  the $2\pi$-grafting of $C$ along an admissible loop $\ell$. 

A {\it multiloop} is a union of (locally) finite disjoint simple closed curves. 
Note that if there is a multiloop $M$  on a projective surface consisting of admissible loops, then a grafting can be done along $M$ simultaneously. 

\section{The construction of Thurston's parameters}\Label{ThurstonParameter}
In this section, we explain the correspondence stated in Theorem \ref{ThurstonCoordimates} in both directions, following \cite{Kullkani-Pinkall-94}. 
\subsection{The construction of $\CP^1$-structures from measured laminations on hyperbolic surfaces.}\Label{MLtoP}
Let  $(\tau, L ) \in \TT \times \ML$, where $\tau$ is a hyperbolic structure on $S$, and $L$ is a measured geodesic lamination on $\tau$. 
Then $(\tau, L)$ corresponds to  the $\CP^1$-structure on $S$ obtained by grafting $\tau$ along $L$ as follows.

Suppose first that $L$ consists of periodic leaves. 
Then, for each leaf $\ell$ of $L$, letting $w$ be its weight, we insert a grafting cylinder of height $w$, and obtain a projective structure $C = (f, \rho)$ on $S$. 
Let $\til{L}$ be the pull back of $L$ by the universal covering map.
Then there is a $\rho$-equivariant pleated surface $\beta\col\H^2 \to \H^3$, obtained by bending $\H^2$ along $\ti{L}$ by the angles given by the weights.  

Let $\kap\col C \to \tau$ be the collapsing map obtained by  collapsing all grafting cylinders in $C$ in \S \ref{sGrafting}. 
For each point $p$ in $\ti{C}$, there is an open neighborhood $D$, called a maximal disk, such that $f$ embeds $D$ onto a round disk in $\CP^1$.
Then, the boundary of $f(D)$ bounds a hyperbolic plane $H_p$ in $\H^3$.  
Denote, by $\Psi_p\col f(D) \to H_p$, the nearest projection. 
Then $\beta \circ\ti\kap (p) = \Psi_p \circ f(p)$ for all $p \in \ti{C}$, where $\ti\kap\col \ti{C} \to \H^2$ be the lift of $\kap\col C \to \tau$.

Suppose next that $L$ contains an irrational sublamination. 
Then,  pick a sequence of measured laminations $L_i$ consisting of closed leaves, such that $L_i$ converges to $L$ as $i \to \infi$. 
Then, for each $i$, as above there is a $\CP^1$-structure $C_i = Gr_{L_i} (\tau)$ and a $\rho_i$-equivariant pleated surface $\beta_i \col \H^2 \to \H^3$. 
As $L_i$ converges to $L$, then $\beta_i$ converges to a pleated surface $\beta\col \H^2 \to \H^3$ uniformly on compact sets, and therefore $C_i$ converges to a $\CP^1$-structure on $S$. 
(See \cite{CanaryEpsteinGreen84}.)

\subsection{The construction of measured laminations on hyperbolic surfaces from $\CP^1$-structures}
Let $C = (f, \rho)$ be a projective structure on $S$ given by a developing pair. 
Let $\ti{C}$ be the universal cover of $C$.

Identify $\CP^1$ conformally  with a unite sphere $\s^2$ in $\R^3$. 
Then, each round circle on $\CP^1$ is the intersection of $\s^2$ with some  (affine) hyperplane $\R^2$ in $\R^3$. 
A {\it (open) round disk} $D$ in $\ti{C}$ is an open subset of $\ti{C}$ homeomorphic to an open disk, such that $f$ embeds $D$ onto an open round disk in $\CP^1$ (we also say a maximal disk {\it of} $\ti{C}$, emphasizing the ambient space for the maximality). 
A {\it maximal disk} $D$ in $\ti{C}$ is a round disk, such that there is no round disk in $\ti{C}$ strictly containing $D$. 
Let $D$ be a maximal disk in $\ti{C}$. 
Then the closure of its image, $\overline{f(D)}$, is a closed round disk in $\CP^1$.

We first see a basic example illustrating the pleated surface corresponding to a $\CP^1$-structure. (See \cite{Epstein-Marden-87}.)
Let $U$ be a region of $\CP^1$ homeomorphic to an open disk such that $\CP^1 \minus U$ contains more than one point  (i.e. $U \ncong \CP^1, \C$).
Regard $\CP^1$ as the ideal boundary of hyperbolic three space $\H^3$, and consider the convex full of  $\CP^1 \minus U$  in $\H^3$.
Then its boundary in  $\H^3$ is a hyperbolic plane $\H^2$ bent along a measured lamination $L_U$ \cite{Epstein-Marden-87}.   
This lamination corresponds to the lamination in the Thurston coordinates. 

Let $\Psi_U$ denote the orthogonal projection from $U$ to  $\bdr \Conv (\CP^1 \minus U)$.  
Then, since $\bdr \Conv (\CP^1 \minus U)$ is, in the intrinsic metric,   a hyperbolic plane,   $\Psi$ yields a continuous map from $U$ to $\H^2$. 
For each maximal disk $D$ in $U$,  let $H_D$ be the hyperbolic plane in $\H^3$ bounded by its ideal boundary of $D$. 
Then $H_D$ intersects $\bdr \Conv (\CP^1 \minus U)$ in either a geodesic or the closure of a complementary region of $L_U$ in $\H^2$.  
Thus, all maximal disks in $U$ correspond to the {\it strata} of  $(\H^2, L)$, where
 each stratum is either the closure of a complementary region of $L$ in $\H^2$,  a leaf of $L$ with atomic measure,  or a leaf of $L$ not contained in the closure of some complementary region.
In particular, two distinct complementary regions $R_1, R_2$ of $(\H^2, L)$ correspond to different maximal disks $D_1, D_2$, and if $R_1$  is close enough to $R_2$, then $D_1$ intersects $D_2$.
Accordingly, the ideal boundary circles of $D_1$ and $D_2$ bound hyperbolic planes intersecting in a geodesic. 
Then the transverse measure of $L_U$ is, infinitesimally, given by the angles between such hyperbolic planes.  
 
Moreover there is a natural measured lamination $\LLL_U$ on $U$ which maps to $L_U$ by $\Psi_U$.
If a leaf $\ell$ has a positive atomic measure $w > 0$, then $\Psi_U^{-1}(\ell)$ is a crescent region $R_w$ of angle $w$, and $R_w$ is foliated by circular arcs $\ell'$ which project to $\ell$. 
Then  $\Psi_U$ is a homeomorphism in the complement of such foliated crescents, and  $\Psi_U$ isomorphically takes  $\LLL_U$ to $L_U$ in the complement  (i.e. it preserves leaves and transverse measure). 
The transverse measure of $\LLL$ is given by infinitesimal angles between ``very close" maximal disks. 
 
As developing maps of  $\CP^1$-structures are, in general,  not embedding, we need to find such projections somewhat more ``locally'' using maximal disks.

Let $D$ be a maximal disk in the universal cover $\ti{C}$, and let $\ol{D}$ be the closure of $D$ in $\ti{C}$. 
In other words, $\ol{D}$ is the connected component  of $f^{-1}(\overline{f(D)})$ containing $D$.
Then  $\ol{f(D)} \minus f(\ol{D})$ is a subset of the boundary circle of the round disk $f(D)$, and the points in this subset are called the {\it ideal points} of $D$. 
(Given a point $p$ of the boundary circle $f(D)$, pick a path $\alpha\col [0,1) \to f(D)$ limiting to $p$ as the parameter goes to $1$. 
Then $p$ is a ideal point of $D$ if and only if the lift of $\alpha$ to $\ti{C}$ leaves every compact subset of $\ti{C}$.) 

Let $\bdr_\infi D \subset \CP^1$ denote the set of all ideal points of $D$. 
As $f | D$ is an embedding onto a round disk, we regard $\bdr_\infi D$ as a subset of the boundary circle of $D$ abstractly (not as a subset of $\CP^1$). 
Then $\bdr_\infi D$ is a closed subset of $\mathbb{S}^1$, since its complement is open. 
Identifying $D$ with a hyperbolic disk conformally, we let $\Core(D)=  \Core_{\ti{C}}(D)$ be the convex hull of $\bdr_\infi D$.

For each point $p$ of $\ti{C}$, there is a round disk containing $p$, and moreover, as $C$ is not $\CP^1$ or $\C$, there is a maximal disk containing $p$. 
The {\it canonical neighborhood} $U_p$ of $C$ is the union of all maximal disks $D_j ~(j \in J)$ in $\ti{C}$ which contain $p$.

  In fact, $(C, \LLL)$ completely determines the Thurston parameters $(\tau, L)$. 
Furthermore  the Thurston parameters near $p \in \CP^1$ are determined by the Thurston parameters of its canonical neighborhood,  in the way similar to the  region $U$ in $\CP^1$ homeomorphic to a disk as above. 
Namely Lemma \ref{DisksInCanonicalNbhd} below implies that the Thurston lamination on $\ti{C}$ near $p$ is determined by the canonical neighborhood $U_p$ of $p$, and the following Proposition states that $U_p$ is a topological disk embedded in $\CP^1$. 
\begin{proposition}[\cite{Kullkani-Pinkall-94}, Proposition 4.1]\Label{CanonicalNeighborhood}
For every point $p$ in $\ti{C}$,
$f\col \ti{S} \to \CP^1$ embeds its canonical neighborhood $U_p$ into $\CP^1$. 
Moreover $U_p$ is homeomorphic to an open disk. 
\end{proposition}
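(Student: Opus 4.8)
The plan is to prove the two assertions in turn: first that $f$ restricted to $U_p$ is injective (hence, being a local homeomorphism, an embedding), and then that the image $f(U_p) \sub \CP^1$ is an open disk. Throughout write $q = f(p)$ and let $\{D_j\}_{j \in J}$ be the maximal disks of $\ti{C}$ containing $p$, so that $U_p = \bigcup_j D_j$ and each $f \restriction D_j$ is a homeomorphism onto a round disk $f(D_j) \ni q$.

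For injectivity, the key reduction is to the case of two maximal disks $D_1, D_2 \ni p$. I first observe that the intersection of two round disks in $\CP^1$, when nonempty, is connected; since $q \in f(D_1) \cap f(D_2)$, the set $W := f(D_1) \cap f(D_2)$ is a connected open set containing $q$, and $W \sub f(D_1)$. I would then compare the two lifts $V_i := (f \restriction D_i)^{-1}(W) \sub D_i$ of $W$ through $p$. Setting $g := (f \restriction D_1)^{-1} \cc (f \restriction V_2) \col V_2 \to D_1$, one has $f \cc g = f \restriction V_2$ and $g(p) = p$; the set $\{x \in V_2 : g(x) = x\}$ is closed, nonempty, and---because $f$ is a local homeomorphism---open, so by connectedness of $V_2$ it is all of $V_2$. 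Hence $V_1 = V_2$, and this common set equals $D_1 \cap D_2$ (any $z \in D_1 \cap D_2$ satisfies $f(z) \in W$, so $z \in V_1$). Injectivity of $f$ on $D_1 \cup D_2$ follows at once: if $f(x) = f(y)$ with $x \in D_1$, $y \in D_2$, then $f(x) \in W$ forces $x, y \in V_1 = V_2$, where $f$ is injective. Since any two points of $U_p$ lie in a union of two maximal disks through $p$, $f \restriction U_p$ is injective.

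For the second assertion, injectivity makes $f \restriction U_p$ a homeomorphism onto $\Omega := f(U_p)$, which is open, connected (all $f(D_j) \ni q$), and a proper subset of $\CP^1$ (as $U_p$ is an open, hence non-compact, subset of $\ti{C}$). It remains to show $\Omega$ is simply connected; I would use the criterion that a connected open $\Omega \subsetneq \s^2$ is simply connected iff $\s^2 \minus \Omega$ is connected, together with the fact that $\Omega$ is a union of round disks all containing $q$. Given a loop $\gamma$ in $\Omega$, compactness covers it by finitely many round disks $B_1, \dots, B_n \sub \Omega$ with $q \in B_i$; applying a M\"obius map sending $q$ to $\infty$ turns each $B_i$ into the complement of a compact Euclidean disk $K_i$ (the boundary circle avoids $\infty$ since $\infty$ is interior), so that $\s^2 \minus \bigcup_i B_i = \bigcap_i K_i$ is compact and convex, hence connected and nonempty. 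Thus $\bigcup_i B_i$ is simply connected and $\gamma$ contracts within it; therefore $\pi_1(\Omega) = 0$. A connected, simply connected, proper open subset of $\s^2$ is homeomorphic to an open disk, completing the proof.

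I expect the main obstacle to be the overlap-consistency step in the injectivity argument: although $D_1$ and $D_2$ both contain $p$ and map to overlapping round disks, they are a priori distinct subsets of the abstract surface $\ti{C}$, and one must rule out that their images are glued incoherently by $f$. The continuation argument above---propagating the identity $g = \operatorname{id}$ from $p$ using only that $f$ is a local homeomorphism on the connected set $V_2$---is what forces the two overlaps to coincide; once this is in place, both the embedding and the disk-topology statements follow by elementary arguments in $\CP^1$, the latter reducing neatly to the convexity of $\bigcap_i K_i$ after sending the common point $q$ to $\infty$.
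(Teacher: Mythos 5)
Your proof is correct, and its overall skeleton matches the paper's: reduce injectivity to two maximal disks through $p$, then obtain the disk statement from convexity of the complement after sending $f(p)$ to $\infty$. The genuine difference is in how the overlap of two disks is controlled. The paper simply writes $f(D_i) \cap f(D_j) = f(D_i \cap D_j)$ and then runs the dichotomy that this set is a crescent or a round annulus, excluding the annulus because it would force $f$ to map $D_i \cup D_j$ homeomorphically onto $\CP^1$; the nontrivial inclusion $f(D_i) \cap f(D_j) \sub f(D_i \cap D_j)$ is asserted rather than proved. Your continuation argument---propagating the identity $g = \operatorname{id}$ from $p$ over the connected overlap $V_2$, using only that $f$ is a local homeomorphism and that the intersection of two round disks is connected---is precisely a proof of that asserted equality, and it renders the crescent/annulus case analysis unnecessary for injectivity (the annulus case is then excluded a posteriori, since injectivity would make the open subset $D_1 \cup D_2$ of $\ti{S}$ homeomorphic to the compact $\CP^1$). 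So on the key point your argument is more complete than the paper's. For the second assertion the paper is more direct than you are: after normalizing $f(p) = \infty$ and choosing $0 \notin f(U_p)$, it identifies $\CP^1 \minus f(U_p)$ as an intersection of compact convex disks in $\C$, hence a compact convex set, whose complement is an open disk; you reach the same conclusion via a finite subcover, the criterion that an open connected subset of $\s^2$ with connected complement is simply connected, and the Riemann mapping theorem. The two routes use the same convexity mechanism, but the paper's version has the added benefit of directly establishing that $\CP^1 \minus U_p$ is compact and convex, which is exactly what is recorded in Corollary \ref{Complement} and then used in the proof of Lemma \ref{DisksInCanonicalNbhd}; if you argue only through simple connectivity, you would still need to extract that convexity statement separately for the later arguments.
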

\begin{proof}
Set $U_p = \cup D_j$, where $D_j$ are maximal disks in $\ti{C}$ containing $p$. 
Let $x, y$ be distinct points in  $U_p$; let $D_x$ and $D_y$ be maximal disks containing $\{p, x\}$ and $\{p, y\}$, respectively. 
By the definition of maximal disks,  $f$ embeds $D_i$ and $D_j$ onto round disks in $\CP^1$.
Then $f(D_i) \cap f(D_j) = f(D_i \cap D_j)$ is either a crescent  or a {\it round annulus}, i.e. a region in $\CP^1$ bounded by disjoint round circles. 
If it is a round annulus,  then $f| D_i \cup D_j$ must be a homeomorphism onto $\CP^1$ and $D_i \cup D_j = \til{S}$, which cannot occur. 
Thus $f(D_i \cap D_j)$ is a crescent, and therefore $f$ is injective on $D_x \cup D_y$.
Hence $f(x) \neq f(y)$, and  $f$ embeds $U_p$ into $\CP^1$. 

The image $f(U_p)$ is not surjective  (as $S$ is not homomorphic to a sphere). 
Thus we can  normalize $\CP^1 = \C \cup \{\infty\}$ so that $p = \infty$ and $0 \not\in U_p$.
Then $\CP^1 \minus  U_p$ is the intersection of the closed disks $\CP^1 \minus D_j$ containing $0$.
Thus $\CP^1 \minus U_p$ is a closed convex subset containing $0$, and therefore $U_p$ is topologically an open disk.
\end{proof}

Then in the setting of Proposition \ref{CanonicalNeighborhood}, we have
\begin{corollary}\Label{Complement}
When  $\CP^1 = \C \cup \{\infty\}$ is normalized so that $p = \{\infty\}$, 
the complement $\CP^1 \minus U_p$ is a compact convex subset $K$ of $\C$.
\end{corollary}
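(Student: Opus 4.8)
The plan is to read the statement directly off the proof of Proposition \ref{CanonicalNeighborhood}, whose final paragraph already contains the essential observation, and then to make the compactness and convexity claims precise. Since Proposition \ref{CanonicalNeighborhood} asserts that $f$ embeds $U_p$ into $\CP^1$, I would first identify $U_p$ with its image $f(U_p)$ and work entirely inside $\CP^1 = \C \cup \{\infi\}$. As noted there, $f(U_p)$ omits at least one point, so the normalization $p = \infi$ and $0 \notin U_p$ is available; write $U_p = \cup_j D_j$, where the $D_j$ are the maximal disks of $\ti{C}$ containing $p$, now regarded as genuine open round disks in $\CP^1$.

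The key local observation I would record is that each complementary set $\CP^1 \minus D_j$ is a closed Euclidean disk of $\C$. Indeed, $p = \infi$ lies in $D_j$, and since $D_j$ is open this is an interior point; a round disk having $\infi$ as an interior point cannot be a half-plane (for a half-plane $\infi$ sits on the bounding circle), so $D_j$ is bounded by an honest round circle of $\C$ and, containing $\infi$, is the exterior of a closed Euclidean disk $K_j$. Thus each $K_j = \CP^1 \minus D_j$ is a compact convex subset of $\C$, and it contains $0$ by the normalization.

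Taking complements then gives
$$ K = \CP^1 \minus U_p = \CP^1 \minus \bigcup_j D_j = \bigcap_j \bigl( \CP^1 \minus D_j \bigr) = \bigcap_j K_j. $$
Convexity of $K$ is immediate, being an intersection of the convex sets $K_j$. For compactness I would argue that $K$ is closed as an intersection of closed sets, and bounded because it is contained in any single $K_{j_0}$, which is itself bounded in $\C$; a closed bounded subset of $\C$ is compact. Since $0 \in K$, the set is in particular nonempty, and this yields that $K$ is a compact convex subset of $\C$, as claimed.

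I do not anticipate a genuine obstacle here, as the corollary merely repackages the conclusion of Proposition \ref{CanonicalNeighborhood}. The one point that deserves care—and the only place a naive argument could slip—is the verification that each $\CP^1 \minus D_j$ is a bounded Euclidean disk rather than a half-plane or an unbounded region; this is exactly where the fact that $p = \infi$ is an \emph{interior} point of every $D_j$ is used, and it is also what guarantees that the possibly infinite intersection $\bigcap_j K_j$ stays bounded and hence compact.
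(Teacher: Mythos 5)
Your proof is correct and takes essentially the same route as the paper, whose argument for this corollary is contained in the final paragraph of the proof of Proposition \ref{CanonicalNeighborhood}: after normalizing $p = \infty$ and $0 \notin U_p$, one writes $\CP^1 \minus U_p = \bigcap_j (\CP^1 \minus D_j)$ as an intersection of closed round disks containing $0$, hence closed and convex. Your explicit verification that each $\CP^1 \minus D_j$ is a \emph{bounded} Euclidean disk (because $\infty$ is an interior point of $D_j$), which gives compactness, is a detail the paper leaves implicit but is exactly the intended reasoning.
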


The canonical neighborhood $U_p$ can be regarded  as a projective structure on an open disk (Proposition \ref{CanonicalNeighborhood}), and one can consider maximal disks in $U_p$, which are {\it a priori}
 unrelated maximal disks in $\ti{C}$. 

\begin{lemma}\Label{DisksInCanonicalNbhd}
The maximal disks of $U_p$ bijectively correspond to the maximal disks of $\ti{C}$ whose closure contain $p$ by the inclusion $U_p \sub \ti{C}$.

Moreover, if $D$ is a maximal disk of $U_p$ containing $p$, then the ideal points of $D$ as a maximal disk of $U_p$ coincide with its ideal points of $D$ as a maximal disk of $\ti{C}$.
\end{lemma}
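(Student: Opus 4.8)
The plan is to pass to the embedded model furnished by Proposition \ref{CanonicalNeighborhood} and Corollary \ref{Complement}: normalize $\CP^1 = \C\cup\{\infty\}$ so that $p=\infty$ and identify $U_p$ with $f(U_p)=\CP^1\minus K$ via the embedding $f|_{U_p}$, where $K\sub\C$ is compact and convex. Under this identification a maximal disk of $U_p$ is exactly a round disk of $\CP^1$ disjoint from $K$ and maximal with that property. The main tool will be the crescent--intersection fact isolated in the proof of Proposition \ref{CanonicalNeighborhood}: two \emph{distinct} maximal disks of $\ti{C}$ that meet do so in a set whose $f$-image is a crescent, and $f$ is injective on their union. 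I will also record the elementary consequence of convexity that, since $K$ is bounded, no bounded round disk is maximal in $U_p$ (a disk tangent to $K$ can always be enlarged to a supporting half-plane), so every maximal disk of $U_p$ is unbounded or surrounds $K$; either way $p=\infty$ lies in its closure. This already places the maximal disks of $U_p$ inside the target family.

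The heart is the locality statement I would isolate first: \emph{if $D$ is a maximal disk of $\ti{C}$ with $p\in\ol{D}$, then $D\sub U_p$.} Write $U_p=\bigcup_j D_j$ over the maximal disks $D_j\ni p$, so $f(D_j)=\CP^1\minus\ol{B_j}$ with $K\sub\ol{B_j}$ and $K=\bigcap_j\ol{B_j}$ by Corollary \ref{Complement}. For each $j$ the disks $D$ and $D_j$ are distinct and meet (as $p\in D_j$ and $p\in\ol{D}$), so $f$ is injective on $D\cup D_j$ and $f(D\cap D_j)=f(D)\cap f(D_j)=f(D)\minus\ol{B_j}$. Taking the union over $j$ gives the clean identity
$$f(D\cap U_p)=f(D)\minus\textstyle\bigcap_j\ol{B_j}=f(D)\minus K,$$
so that $D\sub U_p$ \emph{if and only if} $f(D)\cap K=\emptyset$. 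When $p\in D$ this is immediate, since then $D$ is one of the $D_j$. The remaining case $p\in\bdr D\minus D$ is the crux, and I expect it to be the main obstacle: here $f(D)$ is a half-plane (or exterior disk) with $\infty$ in its boundary, and one must rule out that it reaches into $K$. My proposed route is a symmetric canonical-neighborhood argument: a point $\hat b\in D$ with $f(\hat b)\in K$ would give $D\sub U_{\hat b}$, whence $p\in\ol{U_{\hat b}}$; showing $p\in U_{\hat b}$ then produces a maximal disk through both $p$ and $\hat b$, forcing $\hat b\in U_p$ and contradicting $f(\hat b)\in K$.

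Granting locality, the bijection and maximality transfer are formal. A maximal disk $D$ of $\ti{C}$ with $p\in\ol{D}$ lies in $U_p$ and, having no larger round disk in $\ti{C}$, a fortiori none in $U_p$, so it is maximal in $U_p$. Conversely a maximal disk $D'$ of $U_p$ satisfies $p\in\ol{D'}$ by the convexity remark, and it is maximal in $\ti{C}$: a round disk $E\supsetneq D'$ in $\ti{C}$ would satisfy $p\in\ol{D'}\sub\ol{E}$, hence $E\sub U_p$ by locality, contradicting maximality in $U_p$. The two assignments are the inclusion $U_p\sub\ti{C}$ and its inverse, so they are mutually inverse bijections.

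For the ideal points, suppose $p\in D$. Since $f|_{U_p}$ embeds $U_p$ onto $\CP^1\minus K$, a boundary point of the round disk $f(D)$ is non-ideal for $D$ viewed in $U_p$ exactly when it lies outside $K$; thus $\bdr_\infi^{U_p}D=\bdr f(D)\cap K$. Each boundary point outside $K$ is realized by a point of $\ol{D}$ lying already in $U_p$, hence is non-ideal in $\ti{C}$ as well, giving $\bdr_\infi^{\ti{C}}D\sub\bdr_\infi^{U_p}D$. The only remaining point is to exclude that a tangency point $b\in\bdr f(D)\cap K$ be realized by some $z\in\ol{D}\minus D$ (which would make $b$ non-ideal in $\ti{C}$ but ideal in $U_p$); I would rule this out with the crescent property, as a maximal disk through such a $z$ would meet $D$ in a crescent whose image cannot attain $b\in K$ once $D\cap U_p$ already exhausts the side of $\bdr f(D)$ outside $K$ there. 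This tangency subtlety is a smaller instance of the same phenomenon that drives the obstacle in the locality step.
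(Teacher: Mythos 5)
Your setup and the formal skeleton are sound: the identification of $U_p$ with $\CP^1\minus K$, the observation that every maximal disk of $U_p$ has $p$ in its closure, the identity $f(D\cap U_p)=f(D)\minus K$ (which correctly reduces the containment $D\sub U_p$ to $f(D)\cap K=\emptyset$), and the transfer of maximality once containment is known are all fine. The problem is that everything is then made to rest on the ``locality'' claim in the case $p\in\bdr D\minus D$, and for that case you do not give a proof --- you give a reduction to a statement of exactly the same kind. Your route assumes a point $\hat{b}\in D$ with $f(\hat{b})\in K$ and needs $p\in U_{\hat{b}}$; but all you have is $p\in\ol{D}\sub\ol{U_{\hat{b}}}$, and passing from ``$p$ lies in the closure of a maximal disk through $\hat{b}$'' to ``some maximal disk contains both $\hat{b}$ and $p$'' is precisely the closure-to-membership upgrade that the lemma itself asserts, recentered at $\hat{b}$ instead of $p$. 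No independent argument is offered, so the proof is circular at its crux; and since both directions of the bijection and the ideal-point statement are routed through this step (the tangency issue at the end is, as you yourself note, the same phenomenon again), the whole proposal is incomplete. Nor can the gap be closed by convex geometry of $K$ alone: whether a point of $\bdr K$ is an ideal point of $\ti{C}$ or is realized by an actual point of $\ol{D}$ depends on the developing map, not just on the shape of $K$, so some structure-specific mechanism is unavoidable.

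The paper supplies exactly that mechanism, and it is what your proposal is missing: approximation by maximal disks that contain $p$. For a maximal disk $D$ of $\ti{C}$ with $p\in\bdr D$, the paper produces a sequence of maximal disks $D_i\ni p$ with $D_i\to D$, so that $D\sub\bigcup_i D_i\sub U_p$ holds by the very definition of $U_p$ --- no closure-to-interior upgrade is ever needed. For the converse direction and for the ideal points, the paper analyzes $\bdr K$ through supporting lines: a supporting line meeting $K$ in a single point, and the endpoints of any segment in $\bdr K$, are shown to correspond to ideal points of $\ti{C}$, again because they arise as limits of maximal disks through $p$ degenerating onto supporting half-planes; this is what lets it conclude both that a maximal disk of $U_p$ cannot be enlarged inside $\ti{C}$ and that, for $D\ni p$, the set $\ol{f(D)}\cap K$ consists entirely of ideal points. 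To repair your proof you would need to import this device (or an equivalent one): for instance, prove directly that for every maximal disk $D$ of $\ti{C}$ with $p\in\bdr D$ there exist maximal disks containing $p$ converging to $D$, and only then run your clean identity $f(D\cap U_p)=f(D)\minus K$ to organize the rest.
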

\begin{proof}
If $D$ is a maximal disk in $\ti{C}$ containing $p$, then clearly $D$ is also a maximal disk in $U_p$ by the definition of $U_p$.  
Similarly, if $D$ is a maximal disk in $\ti{C}$ whose boundary contains $p$, then there is a sequence of maximal disks $D_i$ containing $p$ with $D_i \to D$ as $i \to \infi$.
Therefore every maximal disk $D$ in $\ti{C}$ whose closure contains $p$ is a maximal disk in $U_p$. 

We show the opposite inclusion.
By Corollary \ref{Complement}, 
the complement $K = \CP^1 \minus U_p$ is a closed compact convex subset of $C$. 
If there is a (straight) line $\ell$ in $\C$ such that $\ell \cap K$ is a single point $x$, then, by the inclusions $\ti{C} \supset U_p \sub \CP^1$, $x$  corresponds to an ideal point of a maximal ball of $\ti{C}$ containing $p$.
Next suppose that there is a line $\ell$ in $\C$ such that $\ell \cap K$ is a line segment. 
 Then,  letting $P$ be the half-plane bounded by $\ell$ so that $P$ and $K$ have disjoint interiors, there is a sequence of maximal disks $D_i$ of $\ti{C}$ containing $p$ such that $D_i$ converges to $P$ as $i \to \infi$.  
Thus the endpoints of the line segment correspond to ideal points of $\ti{C}$. 

Suppose that $D$ is a maximal disk of $U_p$.
Then $\ol{D}$ intersects $K$ in $\bdr K$. 
If $\bdr D$ intersects $K$ in a line segment, then $D$ is a half-plane in $\C$ with  $\bdr D$ containing $p$. 
As the endpoints of the segment correspond to the ideal points of $\ti{C}$, $D$ is also a maximal disk in $\ti{C}$.

If the closure of $D$ does not intersect $K$ in a line segment, then clearly $D$ contains $p$.
If a point on $\bdr K$ is not an interior point of any line segment of $\bdr K$, then the point corresponds to an ideal point of $\ti{C}$.
Therefore  no round disk in $\ti{C}$ strictly contains $D$, and therefore $D$ is also a maximal ball in $\ti{C}$.
Thus we have the opposite inclusion. 

Finally, suppose that $D$ is a maximal ball in $U_p$ containing  $p$. 
Then $\ol{D} \cap K$ contains no line segment, and therefore, $\ol{D} \cap K$ corresponds to the ideal points of $D$ as a maximal ball in $\ti{C}$.  
\end{proof}

The following proposition yields a lamination on $\ti{C}$ invariant under $\pi_1(S)$.
\begin{proposition}[\cite{Kullkani-Pinkall-94}, Theorem 4.4]\Label{stratification}
The cores $\Core(D)$ of the maximal disks $D$ in $\ti{C}$ are all disjoint and their union is $\ti{C}$.
\end{proposition}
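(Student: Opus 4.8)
The plan is to reduce the whole statement to a single canonical neighborhood and there apply the convex-hull picture of the basic example.

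First I would note that both assertions are local at a point $p \in \ti{C}$. If $p \in \Core(D)$ then in particular $p \in D$, so $\ol{D} \ni p$; by Lemma \ref{DisksInCanonicalNbhd} such a $D$ is a maximal disk of the canonical neighborhood $U_p$, and its ideal points---hence its core---agree whether computed in $U_p$ or in $\ti{C}$. Therefore it suffices to prove that, \emph{inside $U_p$}, every point lies in exactly one core, and then transport the conclusion back by the inclusion $U_p \sub \ti{C}$ and propagate it over $\ti{C}$ by $\pi_1(S)$-equivariance of $f$. By Proposition \ref{CanonicalNeighborhood} and Corollary \ref{Complement} I may identify $U_p$ with $\CP^1 \minus K$ for a compact convex $K \sub \C$ (after a normalization sending $p$ to $\infty$); since the convex-hull construction below is conformally natural, I keep $p$ as a general point of this region. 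Thus $U_p$ is exactly a region of the type treated in the basic example following Epstein--Marden \cite{Epstein-Marden-87}.

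Next I would invoke that basic example verbatim for $U = U_p$: the boundary $\bdr \Conv(K)$ is a hyperbolic plane $\H^2$ bent along a lamination $L$, the orthogonal projection $\Psi \col U_p \to \bdr\Conv(K)$ is continuous, and the maximal disks $D$ of $U_p$ correspond bijectively to the strata $s_D = H_D \cap \bdr\Conv(K)$ of $(\H^2, L)$. The step that carries the argument is the identification $\Core(D) = \Psi^{-1}(s_D)$: the orthogonal projection restricted to the single round disk $f(D)$ is a homeomorphism onto $H_D$ which carries the intrinsic convex hull $\Core(D) = \Conv(\bdr_\infi D)$ onto $s_D = \Conv_{H_D}(\bdr_\infi D)$, these sharing the same ideal endpoints $\bdr_\infi D = \ol{f(D)} \cap \bdr K$, while points of $U_p$ lying off $\Core(D)$ project into neighboring strata. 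Granting this, covering is immediate---each $p$ has $\Psi(p)$ in a unique stratum $s_D$, whence $p \in \Psi^{-1}(s_D) = \Core(D)$---and disjointness follows because $p \in \Core(D_1) \cap \Core(D_2)$ would force $\Psi(p) \in s_{D_1} \cap s_{D_2}$, and distinct strata of a lamination are disjoint, so $s_{D_1} = s_{D_2}$ and hence $D_1 = D_2$ by the bijection.

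The main obstacle is precisely the identification $\Core(D) = \Psi^{-1}(s_D)$, that is, matching the intrinsically defined core with the projection fibre. The subtleties to resolve are: the crescent regions, where $\Psi$ collapses and the cores appear as the foliating circular-arc fibres of $\LLL$; the degenerate strata, where I must check that a maximal disk carries at least two ideal points (so its core is at worst a geodesic, never empty), using that a round disk obstructed at a single point could still be enlarged; and the behavior at the ideal point of the normalization, where the core through $p=\infty$ is the one dual to the minimal enclosing disk of $K$. A concrete alternative for disjointness, which I would use to make the crescent case case-free, is the following elementary lemma: two distinct maximal disks through a common point meet in a crescent (Proposition \ref{CanonicalNeighborhood}), their boundary circles crossing at two points $a,b$; the hyperbolic geodesics joining $a$ to $b$ inside each disk are distinct circular arcs meeting only at $a,b$, and they bound the central part of the crescent, so the two cores, which lie on the opposite outer sides facing $K$, are forced apart. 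Turning these pictures into a single clean argument valid across all stratum types, and confirming $\pi_1(S)$-equivariant consistency across overlapping canonical neighborhoods, is where the real work lies.
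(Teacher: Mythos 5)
Your disjointness argument is sound and is essentially the paper's own: the paper also notes that two intersecting maximal disks develop onto round disks meeting in a crescent, that the ideal points of each disk must therefore lie on the boundary arc outside the other, and that the two geodesics joining the crossing points $a,b$ separate the cores inside $D_1 \cup D_2$. The problem is the route you take for the covering half. You hang it on the identification $\Core(D) = \Psi^{-1}(s_D)$ together with a bijection between maximal disks of $U_p$ and strata of $\bdr\Conv(K)$, and both statements are false exactly at atomic leaves: a bending line $\ell$ of positive weight corresponds not to one maximal disk but to a one-parameter family of maximal disks, all with the same two ideal points and the same stratum, whose cores foliate the crescent $\Psi^{-1}(\ell)$; moreover, even for a two-dimensional stratum, points of the adjacent crescent project onto its boundary geodesic, so $\Psi^{-1}(s_D)$ strictly contains $\Core(D)$. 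Proving the corrected statement (that the cores of the crescent family exactly foliate $\Psi^{-1}(\ell)$, etc.) is precisely the content of the proposition in the embedded-disk case, so as written your main step either begs the question or silently outsources it to the full Epstein--Marden dome theory, which the paper cites only as motivation and deliberately does not use in its proof. You yourself flag this as ``where the real work lies,'' which is an accurate self-assessment: that work is not done.

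What is striking is that you already hold the paper's actual covering argument, but you demote it to a peripheral ``subtlety.'' The paper's proof needs no projection $\Psi$, no stratification, and no crescent analysis: fix $x \in \ti{C}$, normalize so that $f(x) = \infty$, so that $K = \CP^1 \minus f(U_x)$ is compact and convex (Corollary \ref{Complement}); take the \emph{unique} round disk of minimal radius containing $K$ (Claim \ref{MinimalRadius}); by Claim \ref{CenterInHull} and inversion in its boundary circle, the point $\infty$ lies in the convex hull of $\bdr D \cap K$ computed inside the complementary maximal disk $D$ of $U_x$, i.e.\ $x \in \Core_{U_x}(D)$; and Lemma \ref{DisksInCanonicalNbhd} transports $D$, its ideal points, and hence its core to $\ti{C}$. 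Since $x$ was arbitrary and the normalization is always available, this single observation --- the one you describe as ``the behavior at the ideal point of the normalization, where the core through $p=\infty$ is the one dual to the minimal enclosing disk of $K$'' --- \emph{is} the entire covering proof. Restructuring your write-up around that observation, and keeping your lens lemma for disjointness, would turn your sketch into the paper's proof; as it stands, the central step of your main route is a genuine gap.
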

\proof
We first show that the cores are disjoint.
Let $D_1$ and $D_2$ be distinct maximal disks in $\ti{C}$. 
If $D_1 \cap D_2 \neq \emptyset$, then $f(D_1)$ and $f(D_2)$ are round disks intersecting a crescent.  
Therefore $\Core(D_1) $ and $\Core(D_2)$ are disjoint. (Consider the circular arc in $D_1$ orthogonal to $\bdr D_1$; then, indeed, this arc separates $\Core(D_1)$ and $\Core(D_2)$ in $D_1 \cup D_2$.)

\begin{claim}\Label{MinimalRadius}
Given a convex subset $V$ of $\C$, there is a unique round disk $D$ in $\C$ of minimal radius containing $V$. 
\end{claim}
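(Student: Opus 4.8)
The plan is to treat this as the classical \emph{smallest enclosing disk} problem, noting that convexity is a convenience but the essential hypothesis is that $V$ is bounded; this is exactly the situation in the application, where by Corollary \ref{Complement} the relevant set is compact. So I will assume $V$ is nonempty and bounded, and it does no harm to replace $V$ by its closure, a compact convex set, since a closed disk contains $V$ if and only if it contains $\ol{V}$. I will work throughout with closed round disks, the radius being all that matters for minimality.

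First I would establish \textbf{existence} by compactness. Let $r_0 = \inf\{ r \ge 0 : V \sub D \text{ for some closed disk } D \text{ of radius } r \}$. Since $V$ is bounded, the set of admissible radii is nonempty and bounded below, so $r_0 < \infi$. Choose a minimizing sequence of disks $D_n$ of radius $r_n \to r_0$ with centers $c_n$. Because each $D_n$ contains the fixed nonempty set $V$, every $c_n$ lies within distance $r_n$ of a fixed point of $V$, and the $r_n$ are bounded; hence the $c_n$ stay in a compact region. Passing to a convergent subsequence $c_n \to c$, the closed disk $D$ of radius $r_0$ centered at $c$ contains $V$: each $v \in V$ satisfies $|v - c_n| \le r_n$, and letting $n \to \infi$ gives $|v - c| \le r_0$. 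Thus the infimum is attained.

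Next I would prove \textbf{uniqueness} using a midpoint estimate. Suppose two closed disks $D_1, D_2$ of the minimal radius $r_0$, with centers $c_1 \neq c_2$, both contain $V$. Set $c = (c_1 + c_2)/2$. For every $x$ with $|x - c_1| \le r_0$ and $|x - c_2| \le r_0$, the parallelogram identity
$$|x - c_1|^2 + |x - c_2|^2 = 2|x - c|^2 + \tfrac12 |c_1 - c_2|^2$$
yields $|x - c|^2 \le r_0^2 - \tfrac14 |c_1 - c_2|^2$. Since $V \sub D_1 \cap D_2$, every point of $V$ then lies in the closed disk centered at $c$ of radius $\sqrt{r_0^2 - \tfrac14 |c_1 - c_2|^2} < r_0$, contradicting the minimality of $r_0$. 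Hence the minimal disk is unique.

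The step I expect to be the main (though modest) obstacle is not the geometry but the \emph{bookkeeping of degenerate and boundedness hypotheses}: I must phrase the statement so that a one-point $V$ still gives a well-defined disk (of radius $0$), and I must make explicit that it is the boundedness of $V$ — guaranteed here by the normalization of Corollary \ref{Complement} — that confines the centers $c_n$ to a compact set and so forces the minimizing sequence to converge. Once boundedness is secured, both existence and uniqueness follow cleanly, and, as the argument shows, convexity of $V$ is not in fact needed.
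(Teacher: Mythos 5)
Your proof is correct, and its uniqueness step is at heart the same as the paper's: the paper dismisses two minimal disks $D_1 \neq D_2$ by asserting that ``clearly'' there is a round disk $D_3$ of strictly smaller radius with $D_1 \cap D_2 \sub D_3$, hence containing $V$; your parallelogram-identity computation with the midpoint $c = (c_1 + c_2)/2$ is precisely an explicit construction of such a $D_3$, so you have supplied the verification that the paper leaves as obvious. Where you genuinely go beyond the paper is existence: the paper's proof is purely a uniqueness-by-contradiction argument and tacitly assumes that a disk of minimal radius containing $V$ exists at all, whereas you prove attainment of the infimum by a minimizing-sequence compactness argument. Your two bookkeeping points are also well taken: as literally stated the claim is vacuous for unbounded convex $V$ (no round disk in $\C$ contains a half-plane), so boundedness --- supplied in the application by Corollary \ref{Complement} --- is the operative hypothesis; and one must measure minimality with closed disks, since for open disks containing a compact $V$ the infimal radius is never achieved. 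Convexity, as you observe, plays no role in either step. The trade-off is the usual one: the paper's one-line treatment is adequate in context, where the set handed to the claim is always compact and existence is a standard fact, while your version is self-contained and makes the hidden hypotheses explicit.
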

\begin{proof}
Suppose, to the contrary,  that there are two different round disks $D_1, D_2$ containing $V$ which attain the minimal radius.  
Then, clearly, there is a round disk $D_3$ of strictly smaller radius which contains  $V$ (such that $D_3 \supset D_1 \cap D_2$  and $D_3 \sub D_1 \cup D_2$).
This is a contradiction. 
\end{proof}

\begin{claim}\Label{CenterInHull}
The convex hull of  $\bdr D \cap \ol{V}$ contains the center of $c$ with respect to the complete hyperbolic metric on $\,D (\cong \H^2)$ given by the conformal identification.
\end{claim}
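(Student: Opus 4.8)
The plan is to argue by contradiction, exploiting the minimality of the radius established in Claim~\ref{MinimalRadius}. Write $P := \bdr D \cap \ol{V}$ for the set of contact points, and recall that, under the conformal identification $D \cong \H^2$, the hyperbolic center $c$ coincides with the Euclidean center of the round disk $D$; I would keep the Euclidean picture throughout, normalizing so that $c = 0$ and $D$ has radius $r$. Since $V \sub D$ is bounded, its closure $\ol{V}$ is compact, and since $D$ is a round disk of radius $r$ about $c$, every point of $\ol{V}$ lies within distance $r$ of $c$, attaining distance exactly $r$ precisely on $P$; in particular $P$ is compact. Now $c \in \Conv(P)$ is, for points lying on the circle $\bdr D$, equivalent to the assertion that $P$ is not contained in any open half-plane bounded by a line through $c$ (equivalently, $P$ is not contained in an open semicircle of $\bdr D$); as this same half-plane criterion characterizes membership of $c$ in both the Euclidean and the hyperbolic (ideal) convex hull of $P$, it suffices to rule out such a half-plane.

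So suppose, for contradiction, that $c \notin \Conv(P)$. Applying the separating hyperplane theorem to the point $c$ and the compact convex set $\Conv(P)$ yields a unit vector $u$ and a constant $\delta > 0$ with $\langle p - c, u\rangle \ge \delta$ for every $p \in P$; that is, all contact points lie strictly to the $u$-side of the line through $c$. I then translate the center toward them, setting $c_t := c + t u$. For each $p \in P$ one computes $|c_t - p|^2 = r^2 - 2t\langle p - c, u\rangle + t^2 \le r^2 - 2t\delta + t^2$, which is strictly less than $r^2$ for all small $t > 0$. Thus every contact point lies strictly inside the disk of radius $r$ about $c_t$, and the goal is to conclude that, for $t$ small enough, the whole of $\ol{V}$ is contained in a round disk about $c_t$ of radius strictly less than $r$, contradicting the minimality in Claim~\ref{MinimalRadius}.

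The only delicate point is upgrading this strict inequality from the contact set $P$ to all of $\ol{V}$, since the non-contact points of $\ol{V}$, though lying at distance strictly less than $r$ from $c$, may approach distance $r$. I would settle this by compactness: if no such $t$ existed, there would be $t_n \downarrow 0$ and $q_n \in \ol{V}$ with $|c_{t_n} - q_n| \ge r$; passing to a convergent subsequence $q_n \to q_\ast \in \ol{V}$ would force $|c - q_\ast| \ge r$, hence $q_\ast \in P$, contradicting the uniform strict decrease $|c_t - p|^2 \le r^2 - 2t\delta + t^2$ valid on $P$. (Equivalently, Danskin's theorem gives that the right derivative at $t = 0$ of $t \mapsto \max_{q \in \ol{V}} |c_t - q|^2$ equals $\max_{p \in P}\bigl(-2\langle p - c, u\rangle\bigr) \le -2\delta < 0$.) I expect this envelope/compactness step to be the main—indeed the only—obstacle; the separation and the explicit distance computation are routine. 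Once the smaller enclosing disk is produced, it contradicts the uniqueness-and-minimality of $D$ from Claim~\ref{MinimalRadius}, closing the argument.
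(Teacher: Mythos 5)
Your proof is correct and takes essentially the same route as the paper's: assume the center lies outside the hull of the contact set $P = \bdr D \cap \ol{V}$, translate the center toward $P$, and produce a round disk of strictly smaller radius containing $\ol{V}$, contradicting the minimality in Claim~\ref{MinimalRadius}. The paper compresses this into two sentences (asserting, somewhat loosely, that $\ol{V}$ lies in an open half disk, and that a smaller disk is then ``easily'' found); your separation step and the compactness/envelope argument supply precisely the details that the paper's sketch hides, including the justification that the Euclidean and hyperbolic hull criteria agree at the center.
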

\begin{proof}
Suppose not; then the closure of $V$ is contained in the interior of a (Euclidean) half disk of $D$.
Then one can easily find a round disk of smaller radius containing $\ol{V}$, 
\end{proof}
Note that, by the inversion of $\CP^1 = \C \cup \{\infty\}$ about $\bdr D$ exchanges $\infty$ and the center of $D$, and it fixes $\bdr D$ pointwise. 
Then, by Claim \ref{CenterInHull},  in the (conformal) hyperbolic metric on $D$,  the convex hull of $\bdr D \cap \ol{V}$ contains the center of $D$.
Therefore, by the inversion, in the hyperbolic metric on $\CP^1 \minus D$,    the point at $\infty$ is contained in the convex hull of $\bdr D \cap \ol{V}$ in the interior of $\CP^1 \minus D$.

Using the above claims,  we show that, for every $x \in \ti{C}$, there is a maximal disk $D$ in $\ti{C}$ whose core contains $x$. 
Let $U_x = \cup_{j \in J} D_j$ be the canonical neighborhood of $x$, where $D_j$ are the maximal disks in $\ti{C}$ which contain $x$.
Normalize $\CP^1$ so that $f(x) = \infty$. 
Let $D_j^c = \CP^1 \minus f(D_j)$.
Then $\CP^1 \minus f(U_x) = \cap_j D_j^c$.
By Claim \ref{CenterInHull}, let $D$ be the maximal disk of $U_x$  such that $x  \in \Core_{U_x}(D)$.
By Lemma \ref{DisksInCanonicalNbhd}, $D$ is also a maximal disk of $\ti{C}$ which contains $x$, and moreover the ideal points of $D$ as a maximal of $U_x$ coincide with those  as a maximal ball of $\ti{C}$.
Then,  $\Core_{\ti{C}}(D)$ contains $x$.
\Qed{stratification}

By Proposition \ref{stratification}, $\til{C}$ is canonically decomposed into the cores of maximal disks in $\ti{C}$, which yields a  {\it stratification} of $\ti{C}$.
Note that this decomposition is invariant under $\pi_1(S)$, as the maximal balls and ideal points are preserved by the action. 
Moreover, for each maximal disk $D$ in $\ti{C}$, its $\Core(D)$ is properly embedded in $\ti{C}$.
Then the one-dimensional cores and the boundaries components of two-dimensional cores form a $\pi_1(S)$-invariant lamination $\ti\lambda$ on $\ti{C}$, which descends to a lamination $\lambda$ on $C$.

Next we see that the angles between infinitesimally close maximal disks yield a natural transverse measure of this lamination.
Given a point $x \in \ti{C}$, let $D_x$ be the maximal disk in $\ti{C}$ whose core contains $x$.
If $y \in \ti{C}$ is sufficiently close to $x$, then $D_y$ intersects $D_x$. 
Then let $\angle(D_x, D_y)$ denote the angle between the boundary circles of $D_x$ and $D_y$.  
To be precise, it is the angle of  the crescent $D_x \minus D_y$ (or $D_y \minus D_x$) at the vertices. 
 Then $\angle(D_x, D_y) \to 0$ as $y \to x$.  

Let $x$ and $y$ be distinct points of $\ti{C}$ contained in different strata of $(\til{C}, \ti\lambda)$.
Then pick a path $\alpha\col [0,1] \to \ti{C}$ connecting $x$ to $y$ such that $\alpha$ is transverse to $\ti\lambda$. 
Let $\Delta: 0 = t_0 < t_1 < \dots < t_n = 1$ be a finite division of $[0,1]$, and let $x_i = \alpha(t_i)$ for each $i = 0, \dots, n$. 
Let $|\Delta| = \min_{i = 0}^{n-1} (x_{i + 1} - x_i)$, the smallest width of the subintervals.
Then, let  $\Theta(\Delta) = \Sigma_{i = 1}^{n-1} \angle(D_{x_i}, D_{x_{i + 1}})$ for a subdivision $\Delta$ of $[0,1]$ with sufficiently small $| \Delta |$. 
Pick a sequence of subdivisions $\Delta_i$ such that  $| \Delta_i| \to 0$ as $i \to \infi$.
Then $\lim_{i \to \infi} (\Theta(\Delta_i))$ exists and it is independent on the choice of $\Delta_i$ as $i \to \infi$ (\cite[II.1] {CanaryEpsteinGreen84}).
We define the transverse measure of $\alpha$ to be  $\lim_{i \to \infi} (\Theta(\Delta_i))$.
Then $\ti\lambda$ with this transverse measure yields a measured lamination  $\ti\LLL$ invariant under $\pi_1(S)$.
Thus $\ti\LLL$ descends to a measured lamination $\LLL$ on $C$.

 By Lemma \ref{DisksInCanonicalNbhd}, for every $x \in \ti{C}$,  the measured lamination $\LLL$ near $x$ is determined by the canonical neighborhood $U_x$ of $x$.
Let $\LLL_x$ be the measured lamination on $U_x$, which descends to the measured lamination on the boundary of $\Conv (\CP^1 \minus U_x)$.
Then there is a neighborhood $V$ of $x$ in $U_x$ such that $\LLL$ and $\LLL_x$ coincide in $V$ by the inclusion $U_x \sub \ti{C}$. 

For each point $x \in \til{C}$, the boundary circle of the maximal disk $D_x$ bounds a hyperbolic plane $H_x$ in $\H^3$. 
Let $\Psi_x \col f(D_x) \to H_x$ be the projection along geodesics in $\H^3$ orthogonal to $H_x$. 
Then $H_x$ has a canonical normal direction pointing to $D_x$.
By Lemma \ref{DisksInCanonicalNbhd} there is a neighborhood $V$ of $x$, such that $\Psi_y(y) = \Psi_x(y)$.
Moreover, $\Psi_x$ coincides with the projection onto the boundary pleated surface of $\Conv \CP^1 \minus U_x$.
Therefore, as in the case of regions in $\CP^1$, we have a pleated surface $\H^2 \to \H^3$ which is $\rho$-equivariant, as in the following paragraph.

We assume that crescents $R$ in $\ti{C}$ are always foliated by leaves of $\ti{\LLL}$ sharing their endpoints at the vertices of $R$.
  We have a well-defined continuous map $\Psi \col \ti{C} \to \H^3$ defined by $\Psi(x) = \Psi_x(x)$.
We shall take an appropriate quotient of $\ti{C}$ to turn it into  a hyperbolic plane. 
For each crescent $R$ in $\ti{C}$, $\Psi$ takes each leaf in $R$ to the geodesic in $\H^3$ connecting the vertices of $R$.
Identify $x, y \in \ti{C}$, if $x, y$ are contained in a single crescent in $\til{C}$  and $\Psi_x(x) = \Psi_y(y)$; let $\ti\kappa\col \ti{C} \to \ti{C}/\sim$ be the quotient map by this identification, which  collapses each foliated crescent region to a single leaf.
Then by the equivalence relation, $\Psi \col \ti{C} \to \H^3$ induces a continuous map $\beta\col (\ti{C}/ \sim) \to \H^3$ such that $\Psi_x(x) = \beta\cc \kap$. 
Moreover, $\ti{C}/\sim$ is $\H^2$ with respect to the path metric in $\H^3$ via $\Psi$, since, for every $x \in \ti{C}$, $\Psi$ coincides with the projection $U_x \to \bdr \Conv(\CP^1 \minus U_x)$ in a neighborhood of $x$.
Thus we have a $\rho$-equivariant pleated surface $\H^2 \to \H^3$.

The measured lamination $\til{\LLL}$ on $\ti{C}$ descends to a measured lamination $\ti{L}$ on $\H^2$ invariant under $\pi_1(S)$.  
By taking the quotient, we obtain a desired pair $(\tau, L)$ of  a hyperbolic surface $\tau$ and a measured geodesic lamination $L$ on $\tau$. 

Similarly, the collapsing map $\ti\kap\col \ti{C} \to \H^2$ descends to a {\it collapsing map} $\kap\col C \to \tau$. 
Then, for each periodic leaf $\ell$ of $L$, $\kap^{-1}(\ell)$ is a grafting cylinder foliated by closed leaves of $\LLL$. 

Finally we note that as $\beta\col\H^2 \to \H^3$ is obtained by bending $\H^2$ in $\H^3$ along $\ti{L}$, the pair $(\tau, L)$ corresponds to $C$  by the correspondence in \S\ref{MLtoP}.

\section{Goldman's theorem on projective structures with Fuchsian holonomy}\Label{ProofGoldman}

Let $C$ be a $\CP^1$-structure on $S$ with holonomy $\rho$, and let $(\tau, L) \in \TT \times \ML$ be its  Thurston parameters.
Let $\psi\col \H^2 \to \tau$ be the universal covering map, and $\ti{L}$ be the measured lamination $\psi^{-1} (L)$ on $\H^2$.
Let $\Gam = \Im \rho$, and let $\Lambda$ denote the limit set of $\Im \rho$.

\begin{lemma}\Label{PleatintAtLimitSet}
Let $\beta\col \H^2 \to \H^3$ be the associated pleated surface, where $\H^2$ is the universal cover of $\tau$. 
Then, for every leaf $\ti\ell$ of $\til{L}$, $\beta | \ti\ell$ is a geodesic connecting different points of $\Lambda$.
\end{lemma}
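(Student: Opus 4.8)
The plan is to verify the three assertions contained in the statement: that $\beta|\til\ell$ is a geodesic, that its two ideal endpoints are distinct, and that both of them lie in $\Lambda$. The first two are immediate from the construction in \S\ref{ThurstonParameter}, where $\beta$ is obtained by bending $\H^2$ along $\til L$: each leaf $\til\ell$ of $\til L$ is a crease line, so $\beta$ restricts to an isometric embedding of $\til\ell$ onto a complete geodesic of $\H^3$ and in particular does not collapse arc length. Since $\til\ell$ is a complete geodesic of $\H^2$, infinite in both directions, its image is a bi-infinite geodesic, which automatically has two distinct ideal endpoints in $\bdr_{\infi}\H^3=\CP^1$. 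Thus only the membership of these endpoints in $\Lambda$ requires work.

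To prove this, I would fix an ideal endpoint $\xi$ of $\til\ell$ in $\bdr_{\infi}\H^2$, and let $\eta\in\bdr_{\infi}\H^3$ be the corresponding endpoint of $\beta(\til\ell)$, so that $\beta(p)\to\eta$ as $p\to\xi$ along $\til\ell$ (the arc-length parameter tending to $+\infty$). As $\tau$ is closed, choose a compact fundamental domain $K\sub\H^2$ for the action of $\pi_1(S)$ by deck transformations, and a sequence $p_n$ on $\til\ell$ with $p_n\to\xi$. For each $n$ there is $\gamma_n\in\pi_1(S)$ with $k_n=\gamma_n^{-1}p_n\in K$, and by $\rho$-equivariance of $\beta$,
\[
\beta(p_n)=\beta(\gamma_n k_n)=\rho(\gamma_n)\,\beta(k_n),
\]
where $\beta(k_n)$ ranges in the compact set $\beta(K)\sub\H^3$ while the left-hand side converges to $\eta\in\bdr_{\infi}\H^3$.

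I would then argue that $\rho(\gamma_n)$ leaves every compact subset of $\Isom\H^3$: otherwise, after passing to a subsequence, $\rho(\gamma_n)\to g\in\Isom\H^3$ and $\beta(k_n)\to b\in\beta(K)$, so $\rho(\gamma_n)\beta(k_n)\to g\,b$ in the interior of $\H^3$, contradicting $\beta(p_n)\to\eta\in\bdr_{\infi}\H^3$. Hence $\rho(\gamma_n)\to\infty$. Fixing a basepoint $o\in\H^3$, the distance $d_{\H^3}(\rho(\gamma_n)o,\rho(\gamma_n)\beta(k_n))=d_{\H^3}(o,\beta(k_n))$ stays bounded while $\rho(\gamma_n)\beta(k_n)=\beta(p_n)\to\eta$; since points at uniformly bounded distance from a sequence converging to an ideal point share that ideal limit, the orbit points $\rho(\gamma_n)o$ also converge to $\eta$. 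Therefore $\eta$ is an accumulation point of the orbit $\Gamma o$, i.e. $\eta\in\Lambda$, and the same reasoning applied to the other end of $\til\ell$ shows both endpoints lie in $\Lambda$. The one genuinely substantive point is the interplay of the two $\pi_1(S)$-actions---by deck transformations on the domain $\H^2$ (with compact quotient, which lets me corral the escaping points $p_n$ into a single orbit of a compact set) and through $\rho$ on the target $\H^3$---so that equivariance converts the escape of $p_n$ toward $\xi$ into the escape of an honest $\rho$-orbit toward the limit set; the remaining ingredients are standard facts about ideal boundaries.
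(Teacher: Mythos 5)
Your proof is correct, but it takes a genuinely different route from the paper's. The paper argues leaf by leaf: for a lift of a closed leaf, the image curve is invariant under the hyperbolic element $\rho(\alpha)$, hence is a quasi-geodesic whose endpoints are the fixed points of $\rho(\alpha)$, which lie in $\Lambda$; for a non-periodic leaf $\ell$ it chooses simple closed geodesics $\ell_i \to \ell$ in the Hausdorff topology, and then needs \emph{uniform} control on the maps $\beta | \til{\ell}_i$ (bilipschitz embeddings with constants tending to $1$, quoted from an external result) to ensure that the endpoints of $\beta|\til{\ell}_i$, which lie in $\Lambda$, actually converge to the endpoints of $\beta|\til{\ell}$; closedness of $\Lambda$ then finishes. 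Your argument bypasses the approximation entirely: cocompactness of the deck action on $\H^2$ plus $\rho$-equivariance of $\beta$ places every divergent sequence $\beta(p_n)$ within uniformly bounded hyperbolic distance of the orbit $\rho(\gamma_n)o$, so every ideal accumulation point of the whole image $\beta(\H^2)$ -- in particular each endpoint of $\beta|\til\ell$ -- lies in $\Lambda$. This is softer and more general (it uses only continuity and equivariance of $\beta$ and the fact that leaves map isometrically onto complete geodesics, and it proves the stronger statement about all ideal accumulation points of the pleated surface), and it avoids the delicate uniform quasi-geodesic estimate; what the paper's route buys is an explicit identification of the endpoints as fixed points of holonomies and limits thereof, which is closer in spirit to how the pleated surface is constructed as a limit of bendings along multicurves. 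One small point you should make explicit: your argument exhibits $\eta$ as an accumulation point of an orbit $\Gamma o$ with $o \in \H^3$, whereas the paper defines $\Lambda$ via orbit accumulation in $\CP^1$; these coincide here because the holonomy group of a $\CP^1$-structure on a closed surface of genus at least two is non-elementary (and, in the application, Fuchsian), so the discrepancy is harmless but worth a sentence.
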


\begin{proof}
If $\ti\ell$ is a lift of a closed leaf of $L$, then the assertion  clearly holds.

For every closed curve  $\alpha$ on $\tau$, let $\ti\alpha$ be a lift of $\alpha$ to $\H^2$.
Since the curve $\beta | \til{\alp}$ is preserved by the hyperbolic element   $\rho(\alpha)$,
 it is a quasi-geodesic in $\H^3$ whose endpoints are the fixed points of $\rho(\alp)$. 
 Note that the endpoints are contained in $\Lam$.

Let $\ell$ be a non-periodic leaf of $L$, and  let $\ti\ell$ be a lift of $\ell$ to $\H^2$. 
There is a sequence of simple closed geodesics $\ell_i$ on $\tau$ such that $\ell_i$ converges to $\ell$ in the Hausdorff topology  (\cite[I.4.2.14]{CanaryEpsteinGreen84}).
For each $i \in \N$, pick a lift $\til{\ell}_i$ of $\ell_i$ to $\H^2$ so that $\til{\ell}_i \to \ell$  uniformly on compact sets as $i \to \infi$. 
Then, $\beta | \til{\ell}_i$ converges to $\beta | \til{\ell}$ uniformly on compact sets.
Moreover as $\angle_{\tau_i}(\tau_i, L_i) \to 0$,   $\beta_i | \til{\ell}_i$ is asymptotically an isometric embedding: To be precise,  for large enough $i$,  it is a bilipschitz embedding, and its bilipschitz constant converges to $1$ as $i \to \infi$ \cite[Proposition 4.1]{Baba-15gt}. 

As $\ell_i$ are closed loops, the endpoints of $\beta | \til{\ell}_i$ are in $\Lambda$.
Then the endpoints of  $\beta | \til{\ell}_i$ converge to the endpoints of $\beta | \til{\ell}$ in $\CP^1$.
Therefore,  since $\Lambda$ is a closed subset of $\bdr \H^3$, the endpoints of $\beta | \ell$ are also contained in $\Lambda$. 
\end{proof}
We immediately have
\begin{corollary}\Label{FuchsianStrata} 
For each stratum $\sigma$ of $(\H^2, \til{L})$, let $D_\sigma \sub \ti{C}$ be the maximal disk whose core corresponds to $\sigma$. 
Then its ideal points $\bdr_\infi D_\sigma$ are contained in the limit set $\Lambda$.
\end{corollary}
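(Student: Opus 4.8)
The plan is to identify, for each stratum $\sigma$, the set of ideal points $\bdr_\infi D_\sigma$ with the ideal boundary in $\CP^1 = \bdr \H^3$ of the image $\beta(\sigma)$ of $\sigma$ under the pleated surface, and then to read off membership in $\Lam$ directly from Lemma \ref{PleatintAtLimitSet}. Recall from the construction of $\beta$ that $\beta \cc \ti\kap = \Psi$, that $\ti\kap$ restricts to an isometry of $\Core(D_\sigma)$ onto $\sigma$, and that on $D_\sigma$ the map $\Psi$ agrees with the orthogonal projection $\Psi_{D_\sigma} \col f(D_\sigma) \to H_{D_\sigma}$ onto the hyperbolic plane $H_{D_\sigma}$ spanned by the boundary circle of the round disk $f(D_\sigma)$. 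Since $\Psi_{D_\sigma}$ extends continuously to this boundary circle as the canonical identification of it with $\bdr_\infi H_{D_\sigma}$, the image $\beta(\sigma)$ is a convex subset of $H_{D_\sigma}$ whose ideal boundary in $\CP^1$ is exactly $\bdr_\infi D_\sigma$.

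I would then argue case by case according to the type of stratum. If $\sigma$ is one-dimensional, that is, a leaf $\ti\ell$ of $\til{L}$, then $\Core(D_\sigma)$ is a single geodesic and $\bdr_\infi D_\sigma$ is the pair of endpoints of the geodesic $\beta | \ti\ell$; these lie in $\Lam$ by Lemma \ref{PleatintAtLimitSet}. If $\sigma$ is two-dimensional, that is, the closure of a complementary region of $(\H^2, \til{L})$, then $\sigma$ is an ideal polygon obtained as the intersection of the half-planes bounded by its bounding leaves, and correspondingly $\beta(\sigma)$ is a totally geodesic ideal polygon in $H_{D_\sigma}$ whose ideal boundary $\bdr_\infi D_\sigma$ is the closure of the endpoints of the geodesics $\beta | \ti\ell$ as $\ti\ell$ ranges over the bounding leaves of $\sigma$. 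By Lemma \ref{PleatintAtLimitSet} each such geodesic has both endpoints in $\Lam$, and since $\Lam$ is closed in $\CP^1$, the closure of these endpoints remains in $\Lam$; hence $\bdr_\infi D_\sigma \sub \Lam$.

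The main obstacle is the bookkeeping concentrated in the first paragraph: one must check that $\Psi_{D_\sigma}$ genuinely extends to the bounding circle of $f(D_\sigma)$ as the identity onto $\bdr_\infi H_{D_\sigma}$, so that the abstractly defined ideal points $\bdr_\infi D_\sigma$ of the maximal disk coincide, as a subset of $\CP^1$, with the ideal boundary of the plaque $\beta(\sigma)$, and that under the isometry $\ti\kap \col \Core(D_\sigma) \to \sigma$ the bounding leaves of the stratum correspond to the bounding leaves used above. Once this dictionary between the combinatorics of the stratification and the geometry of the pleated surface is in place, the containment $\bdr_\infi D_\sigma \sub \Lam$ is an immediate consequence of Lemma \ref{PleatintAtLimitSet} together with the closedness of $\Lam$.
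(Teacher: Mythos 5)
Your proof is correct and follows exactly the route the paper intends: the paper states this corollary as an immediate consequence of Lemma \ref{PleatintAtLimitSet} (with no written proof), and your argument is precisely the spelled-out version of that implication, identifying $\bdr_\infi D_\sigma$ with the ideal boundary of the plaque $\beta(\sigma)$ and using that $\Lambda$ is closed. The only fine point you rely on, that the ideal boundary of a two-dimensional stratum is the closure of the endpoints of its boundary leaves, is standard structure theory of complementary regions of geodesic laminations on closed surfaces and is a fair thing to invoke.
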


We reprove the following theorem by means of pleated surfaces.
\begin{proposition}(See \cite[Theorem 3.7.3.]{Tan-88})\Label{TwoPiMultiple}
Let $C$ be a $\CP^1$-structure with real holonomy $\rho\col \pi_1(S) \to \PSL(2, \R)$ and 
  $(L, \tau)$ its Thurston parameters. 
Then each leaf of $L$ is periodic, and its weight is $\pi$-multiple. 
If $\rho$ is, in addition, Fuchsian,  then each leaf of $L$ is periodic and its weight is a $2\pi$-multiple. 
\end{proposition}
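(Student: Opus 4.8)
The plan is to analyze the holonomy of $C$ along the leaves of the Thurston lamination $L$ using the pleated surface $\beta\col\H^2\to\H^3$ together with Corollary \ref{FuchsianStrata}, which pins the ideal points of every maximal disk into the limit set $\Lambda$. The key observation is that when $\rho$ is real, $\Im\rho\sub\PSL(2,\R)$ preserves the extended real line $\hat\R=\R\cup\{\infty\}\sub\CP^1$, so the limit set $\Lam$ is contained in the round circle $\hat\R$. Consequently, for every stratum $\sigma$ of $(\H^2,\til L)$, the ideal boundary $\bdr_\infi D_\sigma$ lies on $\hat\R$, and the bending plane $H_{D_\sigma}\sub\H^3$ bounded by the circle $\bdr f(D_\sigma)$ has its ideal boundary circle meeting $\hat\R$ in exactly the two points that are the endpoints of the corresponding geodesic of $\beta$.

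First I would establish periodicity of the leaves. Let $\til\ell$ be a leaf of $\til L$; by Lemma \ref{PleatintAtLimitSet} its image $\beta|\til\ell$ is a geodesic with both endpoints in $\Lam\sub\hat\R$. The two maximal disks (strata) on either side of $\til\ell$ have their ideal point sets in $\hat\R$, and since $\hat\R$ is a single round circle, the round circles $\bdr f(D)$ on the two sides of the leaf both pass through the same pair of points of $\hat\R$ (the endpoints of $\beta|\til\ell$). Any two distinct round circles through two common points bound a crescent whose vertices are exactly those two points, so the development of the leaf runs between two fixed points of $\hat\R$; the element of $\Im\rho$ stabilizing $\til\ell$ must be hyperbolic (or identity) with both fixed points on $\hat\R$. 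Because $\Lam$ is a \emph{closed} $\Im\rho$-invariant subset of $\hat\R$ and the endpoints of leaves accumulate only on $\Lam$, a non-periodic leaf would force its endpoints to be non-fixed accumulation points; I would argue that the transverse-measure structure, being supported where distinct strata meet in geodesics bounded by $\hat\R$, can carry positive measure only along periodic leaves, forcing $L$ to consist of closed leaves.

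Next I would compute the weights. Around a periodic leaf $\ell$ with weight $w$, the grafting inserts a crescent whose development wraps along $\hat\R$; the holonomy $\rho(\gam)$ along $\ell$ is the composition of the hyperbolic translation coming from $\tau$ with a rotation by the bending angle $w$ about the geodesic axis $\beta|\til\ell$. For the developing circles on the two sides of $\ell$ to be $\rho(\gam)$-related round circles both meeting $\hat\R$ in the \emph{same} pair of points, the bending angle must rotate one circle to another circle through those same two points while the composite stays in $\PSL(2,\R)$; this is exactly the condition $w\in\pi\Z$. If in addition $\rho$ is Fuchsian, then $\Im\rho$ preserves each of the two disks bounded by $\hat\R$ (it preserves the orientation of $\hat\R$ and cannot swap the upper and lower half-planes), which rules out the odd multiples of $\pi$ — those would reverse the side of the circle — and leaves precisely $w\in 2\pi\Z$.

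The main obstacle I expect is the non-periodic case: ruling out that an irrational sublamination of $L$ can survive under a real holonomy. The geometric heart is that Lemma \ref{PleatintAtLimitSet} forces every leaf's endpoints onto the \emph{one}-dimensional set $\hat\R$, and the two adjacent strata to share those endpoints; making this rigorously imply that the measured lamination degenerates to closed leaves with quantized weights — rather than merely controlling individual leaves — is the delicate step, and I would handle it by a limiting argument using the approximating closed leaves $\ell_i$ of Lemma \ref{PleatintAtLimitSet} and the fact that each $\beta|\til\ell_i$ already has endpoints in $\Lam\sub\hat\R$ with bending angle a multiple of $\pi$.
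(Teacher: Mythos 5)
Your proposal starts from the right place (real holonomy forces $\Lam\sub\hat{\R}$, and Corollary \ref{FuchsianStrata} puts all ideal points of strata into $\Lam$), but it misses the single fact that drives the paper's proof: a \emph{two-dimensional} stratum has at least \emph{three} ideal points, and since three points of $\CP^1$ determine a round circle, the supporting circle of every two-dimensional stratum must equal $\hat{\R}$, i.e.\ all such strata share the one hyperbolic plane bounded by $\hat{\R}$. This is what kills an irrational sublamination (near it the transverse measure is non-atomic, so distinct two-dimensional strata accumulate with small positive angles between their supporting planes, which is impossible if all those planes coincide) and what quantizes the weights (adjacent complementary regions have the same unoriented supporting plane, so the bending angle is a multiple of $\pi$; for Fuchsian $\rho$ the paper further matches $\beta$ with an equivariant embedding onto $\Conv(\Lam)$, forcing the orientations to agree and the weight to be a multiple of $2\pi$). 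You work only with the \emph{two} endpoints of a leaf, and two points lie on a whole pencil of round circles, so nothing pins down the supporting planes; accordingly your periodicity step never becomes an argument (``I would argue that\dots''), and you concede at the end that it is the missing piece. Also, the fallback you suggest there does not parse: the approximating closed geodesics $\ell_i$ of Lemma \ref{PleatintAtLimitSet} are not leaves of $L$ and carry no bending angles.

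The weight computation has a more serious error. Grafting or bending along $\ell$ does \emph{not} change $\rho(\ell)$: in the crescent $R_\theta$ with developing map $\exp$, the deck transformation of $\ell$ acts by $z\mapsto z+l$, so $\rho(\ell)=(z\mapsto e^l z)$ is purely hyperbolic for every weight; the rotation by the weight appears in the holonomy of curves \emph{transverse} to $\ell$, not of $\ell$ itself. Hence realness of $\rho(\ell)$ imposes no condition on $w$, and your key step collapses. Even granting your (false) premise, the dichotomy is stated backwards: a loxodromic with axis endpoints on $\hat{\R}$ and rotation part $w$ lies in $\PSL(2,\R)$ iff $w\in 2\pi\Z$, while $w\in\pi\Z$ characterizes lying in ${\rm PGL}(2,\R)$ (preserving $\hat{\R}$ but possibly swapping the half-planes); your criterion would thus ``prove'' $2\pi$-quantization from realness alone, which is false. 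The same overreach infects your Fuchsian step: \emph{every} element of $\PSL(2,\R)$, Fuchsian or not, preserves each half-plane, so if that observation ruled out odd multiples of $\pi$ it would do so for all real holonomy, contradicting the proposition's own dichotomy --- indeed, bending a hyperbolic structure by angle $\pi$ along a separating geodesic yields a real, non-Fuchsian holonomy whose Thurston lamination has weight $\pi$. Adjacent strata developing into opposite half-planes is perfectly compatible with the holonomy preserving each half-plane: the side-labels are deck-invariant and simply descend to the quotient surface. What Fuchsian-ness actually buys --- and what your argument must use --- is discreteness and faithfulness, e.g.\ via the paper's comparison of $\beta$ with an equivariant embedding onto the plane bounded by $\Lam$, which forces all strata to have identically oriented supporting planes.
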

\begin{proof}

We first show that $L$ consists of periodic leaves. 
Suppose, to the contrary,  that $L$ contains an irrational minimal sublamination $N$. 
Then the transverse measure is continuous in a neighborhood of  $| N | $ in $\tau$ (i.e. no leaf of $N$ has an atomic measure). 

Thus there are two-dimensional strata  $\sigma, \sigma_1, \sigma_2, \dots$ of $\H^2 \minus \ti{L}$, such that $\sigma_i$ converges to an edge of $\sigma$ as $i \to \infi$. 
Note that, as they are two-dimensional, each $\beta(\sigma_i)$ has at least three ideal points, which lie in a round circle in $\CP^1$.
Let $H, H_1, H_2, \dots$ be the supporting oriented hyperbolic planes in $\H^3$ of $\sigma, \sigma_1, \dots$. 
Let $\angle_{\H^3}(H, H_i) \in [0, \pi]$ be the angle between the hyperbolic planes $H$ and $H_i$ with respect to their orientations, if $H$ and $H_i$ intersect.
Then, by continuity, $\angle_{\H^3}(H, H_i)  \to 0$  as $i \to \infi$.
Thus the ideal points of $\sigma$ and $\sigma_i$  cannot be contained in a single round circle if $i$ is sufficiently large. 
By Corollary \ref{FuchsianStrata}, this cannot happen  as $\Lambda$ is a single  round circle. 

We first show that the weight of each leaf of $L$ is a multiple of $\pi$. 
Let $\sigma_1$ and $\sigma_2$ be components of $\H^2 \minus \ti{L}$ adjacent along a leaf of $\ti{L}$. 
Let $H_1$ and $H_2$ be the support planes of $\sigma_1$ and $\sigma_2$, respectively.
Then the angle between $H_1$ and $H_2$ is the weight of $\ell$.
As the ideal points of $\sigma_1$ and $\sigma_2$ must lie in the round circle $\Sigma$, the angle must be a multiple of $\pi$. 

Suppose, in addition, that $\rho$ is Fuchsian. 
Let $\beta_0\col \H^2 (= \ti\tau ) \to \H^3$ be the $\rho$-equivariant embedding onto the hyperbolic plane  $H_\Lambda$ bounded by $\Lambda$. 
For each $i = 1,2$,  as each boundary component $m$ of $\sigma_i$ covers a periodic leaf of $L$,  $\beta = \beta_0$ on $m$. 
 Therefore $H_1 = H_2 =  \Conv(\Lambda)$, and $\beta_0 = \beta$ on $\sigma_i$ for each $i = 1, 2$.
 As the orientation of $H_1$ coincides with that of $H_2$, the weight of $m$ must be a multiple of $2\pi$. 
\end{proof}

\proof[Proof of Theorem \ref{Goldman}]
By Proposition,
\ref{TwoPiMultiple}, $L$ is a union of closed geodesics $\ell$ with $2\pi$-multiple weights.
For each (closed) leaf $\ell$ of $L$,  let $2 \pi n_\ell$ denote the weight of $\ell$, where $n_\ell$ is a positive integer.
Let $\kap\col C \to \tau$ be the collapsing map. 
Then, $\kap^{-1}(\ell)$  is a grafting cylinder of height $2\pi n_\ell$, the structure inserted by $2\pi$-grafting $n$ times.
Therefore, $C$ is obtained by grafting along a multiloop corresponding to $L$.
\endproof

\section{The path lifting property in the domain of discontinuity }\Label{LfitingProperty}
Let $C = (f, \rho)$ be a $\CP^1$-structure on $S$.
Then, let $\Lambda$ be the limit set of $\Im \rho$, and let $\Omega = \CP^1 \minus \Lam$, the domain of discontinuity.  

\begin{proposition}
For every $x \in \Omega$, there is  a neighborhood $V_x$ in  $\Omega$ such that,  for every $y \in \til{S}$ with $f(y) \in V_x$,   $V_x$ is contained in the maximal disk whose core contains $x$.
\end{proposition}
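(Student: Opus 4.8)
I read the statement as asserting the following uniform covering estimate: writing $D_y$ for the maximal disk in $\ti{C}$ whose core contains $y$ (Proposition \ref{stratification}), and identifying $D_y$ with its embedded image $f(D_y) \sub \CP^1$ (Proposition \ref{CanonicalNeighborhood}), there is a neighborhood $V_x$ of $x$ in $\Omega$ with $V_x \sub f(D_y)$ for \emph{every} $y \in \ti{S}$ with $f(y) \in V_x$. The plan is to prove this in two steps: a general lemma placing the ideal points of every maximal disk in the limit set, followed by a Euclidean estimate that converts the distance $d(x,\Lam)$ into a uniform lower bound on the size of the developed maximal disks.

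\emph{Step 1 (ideal points lie in $\Lam$).} First I would show that for every maximal disk $D$ of $\ti{C}$ the ideal points $\bdr_\infi D$ develop into $\Lam$. Given $p \in \bdr_\infi D$, pick $z_n \in D$ with $f(z_n) \to p$ and $z_n$ leaving every compact subset of $\ti{C}$, as in the definition of ideal point. Since $S$ is compact, choose $\gam_n \in \pi_1(S)$ with $\gam_n z_n$ in a fixed compact fundamental domain; after passing to a subsequence $\gam_n z_n \to z_0$, and $\gam_n \to \infi$ because $z_n$ escapes all compacta. By equivariance $f(z_n) = \rho(\gam_n)^{-1} f(\gam_n z_n)$, so $p = \lim_n \rho(\gam_n)^{-1} w_n$ with $w_n = f(\gam_n z_n) \to f(z_0)$. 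As $\rho(\gam_n)^{-1}$ escapes to infinity in $\PSL(2,\C)$, this limit is an accumulation point of the $\Im\rho$-orbit, hence $p \in \Lam$. (If $\Im\rho$ is not discrete then $\Omega = \emptyset$ and there is nothing to prove.)

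\emph{Step 2 (uniform size from the convex-hull condition).} Set $\delta = d(x,\Lam) > 0$ in the spherical metric on $\CP^1 \cong \s^2$, and consider a $y$ with $f(y)$ near $x$. By construction $y \in \Core(D_y)$, so $f(y)$ lies in the hyperbolic—equivalently, for ideal boundary points, the Euclidean—convex hull of $\bdr_\infi D_y$ inside the round disk $f(D_y)$; by Step 1 these ideal points lie in $\Lam$, hence at spherical distance $\geq \delta$ from $x$. The elementary geometric fact I would invoke is that if an interior point $q$ of a round disk lies in the convex hull of a set $E$ of its boundary points with $E$ at distance $\geq \ep$ from $q$, then $q$ is at distance $\gtrsim \ep^2$ from the entire bounding circle (the extremal configuration is the chord through $q$ perpendicular to the radius, which balances the two endpoints and forces $d(q,\bdr) \geq c\,\ep^2$). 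Applying this with $q = f(y)$ and $\ep$ comparable to $\delta$ produces a ball $B(f(y), c'\delta^2) \sub f(D_y)$ whose radius depends only on $\delta$. Taking $V_x = B(x,r)$ with $r$ so small that $2r \le c'\delta^2$ then gives $V_x \sub B(f(y), c'\delta^2) \sub f(D_y)$ for every $y$ with $f(y) \in V_x$, which is the claim.

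The hard part will be the estimate in Step 2: the convex-hull condition yields a depth only \emph{quadratic} in $\delta$, not linear, so one must be slightly careful to extract a neighborhood whose size is governed purely by $d(x,\Lam)$ and is therefore uniform over all $y$ developing into $V_x$. Step 1 is more robust, but it relies on the basepoint-independence of $\Lam$, which one should note holds in the non-elementary case—the only situation in which $\Omega \neq \emptyset$ is of interest.
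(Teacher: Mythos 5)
Your reading of the statement (with $D_y$ the maximal disk whose core contains $y$, and the conclusion $V_x \sub f(D_y)$) is the right one, and your Step 2 is essentially sound: in the plane, if $q$ lies in the convex hull of a set $E$ on a circle of radius $\rho$ and $d(q,E) \ge \ep$, then $d(q,\partial D') \ge \ep^2/(2\rho)$, and this transfers to $\s^2$ with universal constants (one point of care: for a spherical cap the hyperbolic hull of ideal points is carried to the Euclidean hull by central projection from the pole of the cap's plane, not by orthogonal projection, so the hyperbolic-to-Euclidean comparison needs that extra step). This quantitative argument genuinely differs from the paper's, which is a soft compactness contradiction: if no $V_x$ existed, the hyperbolic support planes of the disks $D_{y_i}$ would approach $x$ in the visual metric on $\H^3 \cup \bdr \H^3$, forcing ideal points of the corresponding strata to accumulate at $x$, contradicting Corollary \ref{FuchsianStrata} and the openness of $\Omega$. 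Your version buys an explicit modulus (the radius of $V_x$ depends only on $d(x,\Lam)$); the paper's buys brevity.

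The genuine gap is in Step 1, which is exactly Corollary \ref{FuchsianStrata} of the paper, and your dynamical proof of it fails at two points. First, $\gam_n \to \infi$ in $\pi_1(S)$ does not imply $\rho(\gam_n) \to \infi$ in $\PSL(2,\C)$: holonomies of $\CP^1$-structures need only be non-elementary and liftable, so $\rho$ can have infinite kernel even with discrete image (compose a surjection of $\pi_1(S)$ onto a free group $F_2$ with a Schottky representation); if infinitely many $\rho(\gam_n)$ coincide, your limit is just $p = f(z_0)$ and nothing follows. Second, even when $\rho(\gam_n)^{-1}$ does escape to infinity, the inference ``$\rho(\gam_n)^{-1}w_n \to p$, $w_n \to w_0$, hence $p \in \Lam$'' is false in general: the convergence property gives $a, b \in \Lam$ with $\rho(\gam_n)^{-1} \to a$ locally uniformly only on $\CP^1 \minus \{b\}$, and if $w_0 = b$ the limit $p$ can be an arbitrary point of $\CP^1$ (take $\rho(\gam_n)^{-1} = g^n$ with $g$ loxodromic and $w_n = g^{-n}q$). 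You cannot exclude $w_0 \in \Lam$, since developing maps do meet the limit set (for grafted Fuchsian structures $f$ is surjective). The paper proves the needed statement by a structural argument your proposal is missing: closed leaves are dense in $L$; for a lift of a closed leaf the pleated surface image is a quasi-geodesic invariant under a loxodromic holonomy element, so its endpoints are fixed points of that element and hence lie in $\Lam$; general leaves and strata are then handled by passing to limits, using that $\Lam$ is closed (Lemma \ref{PleatintAtLimitSet} and Corollary \ref{FuchsianStrata}). Replacing your Step 1 by a citation of Corollary \ref{FuchsianStrata}, or by that argument, repairs the proof; combined with your Step 2 it gives a correct, and more quantitative, alternative to the paper's proof.
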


\begin{proof}
The union $\H^3 \cup \bdr \H^3$ is a  unit ball in the Euclidean space and  the visual distance is the restriction of the Euclidean metric. 

Suppose, to the contrary, that there is no such neighborhood $V_x$. 
Then there is a sequence $x_1, x_2, \dots \in f^{-1}(x)$ such that, letting $H_1, H_2, \dots$ be their corresponding hyperbolic support planes, 
the visual distance from $H_i$ to $x$ goes to zero as $i \to \infi.$
Let $y_i \in \H^3$ be the nearest point projection of $f(x_i)$ to $H_i$.
Then,  $y_i \to x$ in the visual metric. 
Let $\sigma_i$ be the stratum of $(\H^2, \ti{L}_i)$ which contains $\ti\kappa(x_i)$. 
Then, as the orthogonal projection of $f(x_i)$  to $H_i$ is $y_i$, the visual distance between $x$ and  $\beta_i(\sigma_i)$ goes to zero as $i \to \infi$.
Therefore, there is an ideal point $p_i$ of $\beta(\sigma_i)$ which converges to $x$ as $i \to \infi$.
As $\Omega$ is open,  this is a contradiction by Corollary \ref{FuchsianStrata}.
\end{proof}

As $f$ embeds maximal disks of $\ti{C}$ into $\CP^1$, we immediately have 

\begin{corollary}
For each point $x \in \Omega$, there is a neighborhood $V_x$ of $x$ such that, if $f(y) \in V_x$ for $y \in \til{S}$, then $f$ embeds a neighborhood $W_y$ of $y$ in $\ti{S}$ homomorphically onto $V_x$. 
\end{corollary}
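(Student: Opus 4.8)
The plan is to deduce this corollary directly from the preceding proposition together with the already-established fact that the developing map $f$ embeds each maximal disk of $\ti{C}$ onto a round disk in $\CP^1$. The proposition hands me, for each $x \in \Omega$, a neighborhood $V_x \subset \Omega$ with the property that whenever $y \in \ti{S}$ satisfies $f(y) \in V_x$, the maximal disk $D_y$ whose core contains $x$ actually contains all of $V_x$ in its image. (More precisely, $V_x$ lies in $f(D_y)$, identifying $D_y$ with its embedded image.) The task is to upgrade this into a statement about $f$ being a local homeomorphism with a \emph{uniform} target neighborhood $V_x$ that does not depend on the chosen preimage $y$.

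\medskip

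\emph{First} I would fix $x \in \Omega$ and take $V_x$ to be the neighborhood provided by the proposition (possibly shrinking it to a round disk contained in the one given, so that it is connected and simply connected). \emph{Next}, given any $y \in \ti{S}$ with $f(y) \in V_x$, I invoke the proposition to obtain the maximal disk $D_y$ with $x \in \Core(D_y)$ and $V_x \subset f(D_y)$. Since $f$ embeds $D_y$ homeomorphically onto the round disk $f(D_y)$ by the definition of a maximal disk, the restriction $f|_{D_y} \col D_y \to f(D_y)$ is a homeomorphism. \emph{Then} I set $W_y := (f|_{D_y})^{-1}(V_x) \subset D_y$; because $f|_{D_y}$ is a homeomorphism and $V_x \subset f(D_y)$ is open, $W_y$ is an open neighborhood of $y$ in $D_y$, hence in $\ti{S}$, and $f$ maps $W_y$ homeomorphically onto $V_x$. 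This is exactly the asserted conclusion.

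\medskip

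\emph{The main point to be careful about} is that $y \in W_y$, i.e. that the preimage of $V_x$ inside $D_y$ really contains the starting point $y$. This holds because $f(y) \in V_x$ by hypothesis and $y \in D_y$: indeed $x \in \Core(D_y) \subset D_y$ and, as the proposition guarantees $V_x \subset f(D_y)$ with $f(y) \in V_x$, the unique $f|_{D_y}$-preimage of $f(y)$ is $y$ itself, so $y = (f|_{D_y})^{-1}(f(y)) \in W_y$. A secondary subtlety is that the neighborhood $V_x$ is genuinely uniform: it is selected once for $x$ via the proposition and works for \emph{every} admissible $y$, which is the content that makes this corollary usable for the path-lifting argument in \S\ref{LfitingProperty}.

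\medskip

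\emph{I expect no serious obstacle here}, as the corollary is essentially a repackaging of the proposition using the embedding property of maximal disks; the only thing requiring genuine input is the uniformity of $V_x$, and that has already been supplied by the proposition. The step deserving the most attention is simply verifying that $f|_{W_y}$ is onto $V_x$ rather than merely into it, which follows since $f|_{D_y}$ is a bijection onto $f(D_y) \supset V_x$ and $W_y$ is defined as the full preimage of $V_x$.
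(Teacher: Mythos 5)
Your proposal is correct and is essentially the paper's own argument: the paper deduces the corollary in one line (``As $f$ embeds maximal disks of $\ti{C}$ into $\CP^1$, we immediately have\dots''), and you have simply spelled out that deduction, defining $W_y$ as the $f|_{D_y}$-preimage of $V_x$ inside the maximal disk $D_y$ supplied by the preceding proposition. Note only that the phrase ``maximal disk whose core contains $x$'' in the proposition should be read as the maximal disk whose core contains the preimage point $y$ (the paper's wording conflates $x\in\CP^1$ with its preimages in $\ti{S}$), under which reading $y\in\Core(D_y)\subset D_y$ holds automatically and your verification that $y\in W_y$ is immediate.
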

 Theorem \ref{Covering} immediately follows from the corollary.

\bibliographystyle{abbrv}
\bibliography{ThurstonProjectiveStructures}

\end{document}